\definecolor{cite}{rgb}{0.30,0.60,1.00}
\definecolor{url}{rgb}{0.00,0.00,0.80}
\definecolor{link}{rgb}{0.40,0.10,0.20}
\newtheorem{theorem}{Theorem}[section]
\newtheorem{proposition}[theorem]{Proposition}
\newtheorem{corollary}[theorem]{Corollary}
\theoremstyle{definition}
\theoremstyle{definition}
\newtheorem{remark}[theorem]{Remark}
\theoremstyle{definition}
\newcommand{\zIntegers}{\mathbb{Z}}
\newcommand{\cComplex}{\mathbb{C}}
\newcommand{\multiplicativegroup}[1]{#1^{\times}}
\newcommand{\RealPart}{\mathrm{Re}}
\newcommand{\Hom}{\mathrm{Hom}}
\newcommand{\sizeof}[1]{\left|#1\right|}
\newcommand{\standardForm}[2]{\left\langle #1,#2\right\rangle}
\newcommand{\finiteField}{\mathbb{F}}
\newcommand{\fieldCharacter}{\psi}
\newcommand{\centralCharacter}[1]{\omega_{#1}}
\newcommand{\Ind}[3]{\mathrm{Ind}_{#1}^{#2}\left(#3\right)}
\newcommand{\Whittaker}{\mathcal{W}}
\newcommand{\Contragradient}[1]{#1^{\vee}}
\newcommand{\SpehRepresentation}[2]{\Delta\left(#1, #2\right)}
\newcommand{\IdentityMatrix}[1]{I_{#1}}
\newcommand{\diag}{\mathrm{diag}}
\newcommand{\GL}{\mathrm{GL}}
\newcommand{\UnipotentSubgroup}{U}
\newcommand{\ParabolicForWss}[2]{P_{\qty(#2^{#1})}}
\newcommand{\UnipotentRadicalForWss}[2]{N_{\qty(#2^{#1})}}
\newcommand{\UnipotentRadicalForWssRecursion}[2]{\mathcal{Y}_{c,k}}
\newcommand{\squareMatrix}{\operatorname{Mat}}
\newcommand{\SymmetricGroup}{\mathfrak{S}}
\newcommand{\localField}{F}
\newcommand{\ringOfIntegers}{\mathfrak{o}}
\newcommand{\maximalIdeal}{\mathfrak{p}}
\newcommand{\maximalCompact}{K}
\newcommand{\mdifferential}{\differential^{\times}}
\newcommand{\uniformizer}{\varpi}
\newcommand{\VolumeOf}{\operatorname{Vol}}
\newcommand{\fieldCharacterkc}[2]{\fieldCharacter_{\qty(#2^{#1})}}
\newcommand{\qAnalog}[2]{[#1]_{#2}}
\newcommand{\pHallLittlewood}{\mathrm{P}}
\newcommand{\qHallLittlewood}{\mathrm{Q}}
\newcommand{\hHallLittlewood}{\mathrm{H}}
\newcommand{\kostkaFoulkes}{\mathrm{K}}
\newcommand{\coKostkaFoulkes}{\tilde{\mathrm{K}}}
\newcommand{\htHallLittlewood}{\tilde{\mathrm{H}}}
\newcommand{\nOfPartition}{\mathfrak{n}}
\newcommand{\principalSeries}[1]{\operatorname{PS}(#1)}
\newcommand{\complexTuple}[1]{\underline{#1}}
\newcommand{\SphericalFunction}[1]{\varphi^{\circ}_{#1}}
\newcommand{\SpehSpherical}[2]{S^{\circ}_{#1, #2, \fieldCharacter}}
\newcommand{\rSpehSpherical}[2]{S_{#1, #2, \fieldCharacter}}
\newcommand{\WhittakerSpherical}[1]{W^{\circ}_{#1, \fieldCharacter}}
\newcommand{\SphericalSection}[1]{f^{\circ}_{\principalSeries{#1}}}
\newcommand{\zetaIntegral}{\operatorname{Z}}
\title[Casselman--Shalika formula for Speh representations]{On a Casselman--Shalika type formula for unramified Speh representations}
\author{Elad Zelingher}
\address{Department of Mathematics, University of Michigan, 1844 East Hall, 530 Church Street, Ann Arbor, MI 48109-1043 USA}
\email{eladz@umich.edu}
\keywords{Whittaker models, Casselman--Shalika, Speh representations}
\subjclass[2020]{Primary: 11F70. Secondary: 05E05, 11F66, 22E50, 33D52.}
\begin{document}

\begin{abstract}
	We give a Casselman--Shalika type formula for unramified Speh representations. Our formula computes values of the normalized spherical element of the $(k,c)$ model of a Speh representation at elements of the form $\diag\left(g, \IdentityMatrix{(k-1)c}\right)$, where $g \in \GL_c\left(\localField\right)$ for a non-archimedean local field $\localField$. The formula expresses these values in terms of modified Hall--Littlewood polynomials evaluated at the Satake parameter attached to the representation. Our proof is combinatorial and very simple. It utilizes Macdonald's formula and the unramified computation of the Ginzburg--Kaplan integral. This addresses a question of Lapid--Mao \cite{LapidMao2020}.
\end{abstract}

\maketitle

\section{Introduction}\label{sec:introduction}

In representation theory, models of representations are an important concept. The idea is quite simple: suppose that $G$ is a topological group and that $H$ is a closed subgroup of $G$. Let $\pi$ and $\sigma$ be irreducible representations of $G$ and $H$, respectively, such that $\Hom_{H}\left(\pi\restriction_H, \sigma\right)$ is one-dimensional. Then by Frobenius reciprocity there exists a unique subspace of $\Ind{H}{G}{\sigma}$ that is isomorphic to $\pi$. This is a model of $\pi$, i.e., a realization of $\pi$ as a space of functions from $G$ to the space of $\sigma$. Many areas in representation theory concern models of representations or rely on them. We briefly mention as an example the local Gan--Gross--Prasad conjectures, which concern Bessel and Fourier--Jacobi models \cite{GGP2012}.

In the recent past years, certain new models, which are called $(k,c)$ models, have been studied \cite{LapidMao2020, CaiFriedbergGinzburgKaplan2019, CaiFriedbergGourevitchKaplan2023}. Let $\localField$ be a non-archimedean local field, and let $\fieldCharacter \colon \localField \to \multiplicativegroup{\cComplex}$ be a non-trivial character. Given positive integers $k$ and $c$, let $\UnipotentRadicalForWss{k}{c}$ be the unipotent radical of the standard parabolic subgroup $\ParabolicForWss{k}{c}$ of $\GL_{kc}\left(\localField\right)$ corresponding to the composition $(c^k)$. We define a character $\fieldCharacterkc{k}{c} \colon \UnipotentRadicalForWss{k}{c} \to \multiplicativegroup{\cComplex}$ by the formula
$$\fieldCharacterkc{k}{c} \begin{pmatrix}
	\IdentityMatrix{c} & X_1 & \ast & \ast  & \ast \\
	& \IdentityMatrix{c} & X_2 & \ast & \ast \\
	& & \ddots & \ddots & \ast  \\
	& & & \IdentityMatrix{c} &  X_{k-1} \\
	& & & & \IdentityMatrix{c}
\end{pmatrix} = \fieldCharacter\left( \sum_{j=1}^{k-1} \trace X_j \right).$$
An irreducible representation $\pi$ of $\GL_{kc}\left(\localField\right)$ is said to have a $(k,c)$ model if there exists a unique subspace of $\Ind{\UnipotentRadicalForWss{k}{c}}{\GL_{kc}\left(\localField\right)}{\fieldCharacterkc{k}{c}}$ that is isomorphic to $\pi$, in which case we denote it $\Whittaker\left(\pi, \fieldCharacterkc{k}{c}\right)$ and call it the \emph{$(k,c)$ model of $\pi$}. Notice that a $(k,1)$ model of an irreducible representation $\pi$ of $\GL_k\left(\localField\right)$ is simply a Whittaker model of $\pi$, and recall that $\pi$ is called generic if it admits such model. Whittaker models are of great importance in the representation theory of $p$-adic groups, and have many applications, including integral constructions of local factors \cite{Jacquet1983rankin, shahidi1984fourier, shahidi1990proof} and indexing of $L$-packets \cite[Section 9, item (2)]{GGP2012}.

Certain special representations, called (generalized) Speh representations, have a $(k,c)$ model (this is proved in \cite{LapidMao2020} and \cite{CaiFriedbergGourevitchKaplan2023}). Speh representations are parameterized by tuples $(\tau, c)$, where $\tau$ is an irreducible generic representation of $\GL_k\left(\localField\right)$ and $c$ is a positive integer (see \cite{LapidMao2020} and \cite{CaiFriedbergGourevitchKaplan2023} for details). We denote the corresponding representation of $\GL_{kc}\left(\localField\right)$ by $\SpehRepresentation{\tau}{c}$.

In the past recent years, new integral representations for various $L$-functions appeared in the literature \cite{CaiFriedbergGinzburgKaplan2019,CaiFriedbergKaplan2022,kaplan2018, GourevitchKaplan2023,LapidMao2020,ginzburg2019tensor,GinzburgSoudry2020}. These constructions make use of $(k,c)$ models of Speh representations attached to an irreducible generic representation $\tau$ of $\GL_k\left(\localField\right)$. Among these constructions are the generalized doubling method \cite{CaiFriedbergGinzburgKaplan2019,CaiFriedbergKaplan2022,GourevitchKaplan2023,kaplan2018} and the Ginzburg--Kaplan integral \cite{ginzburg2019tensor,Kaplan2023}, \cite[Appendix A]{kaplan2018}, both of which are constructions for the $L$-function corresponding to the tensor product representation attached to a pair of representations $\pi$ and $\tau$ of $G$ and $\GL_k$, respectively, where for the generalized doubling method $G$ is a classical group, and for the Ginzburg--Kaplan integral $G = \GL_c$. An important feature of both of these constructions is that they only require the representation $\tau$ to be generic, and do not assume anything about the representation $\pi$. This allowed Cai--Friedberg--Kaplan to give a proof based on $L$-functions and converse theorems for the existence of the standard functorial transfer of cuspidal automorphic representations of split classical groups, without assuming that the representations in question are generic \cite{kaplan2018}.

Naturally, when working on integral representations for $L$-functions, knowledge of certain values of special functions attached to unramified data is of great importance for the purpose of unramified computations. When $\tau = \principalSeries{\complexTuple{z}}$ is an unramified irreducible representation of $\GL_k\left(\localField\right)$ satisfying certain conditions, the Speh representation $\SpehRepresentation{\tau}{c}$ is also an unramified irreducible representation, and there exists a unique spherical element $\SpehSpherical{\complexTuple{z}}{c}$ in $\Whittaker\left(\SpehRepresentation{\tau}{c}, \fieldCharacterkc{k}{c}\right)$, such that its value at identity is $1$. In this article, we find an explicit formula for the special values $\SpehSpherical{\complexTuple{z}}{c}\left(\begin{smallmatrix}
	g\\
	& \IdentityMatrix{\left(k-1\right)c}
\end{smallmatrix}\right)$, where $g \in \GL_c\left(\localField\right)$, and where $\fieldCharacter$ is a character with conductor $0$. We refer the reader to \Cref{thm:special-values-theorem-summary} for our result, which expresses these special values as modified Hall--Littlewood polynomials evaluated at the Satake parameters of $\tau = \principalSeries{\complexTuple{z}}$. This gives a partial answer to a question raised by Lapid--Mao in \cite[Section 7]{LapidMao2020}.

We remark that for the unramified computations done in the generalized doubling method and the Ginzburg--Kaplan integrals, a formula as the one we provide here was not available, and the authors were able to use inductive methods instead. In fact, our proof relies on the unramified computation for the Ginzburg--Kaplan integral done by Ginzburg in \cite{ginzburg2019tensor}. In \Cref{rem:unramified-computation-of-ginzburg-kaplan-integral-using-modified-hall-littlewood} we show that one can give a short proof of the unramified computation of this aforementioned integral using our formula.

Many formulas for values of spherical elements corresponding to different models of unramified representations have been given in the past. We mention Shintani's formula \cite{shintani1976explicit} for values of the normalized spherical element in the Whittaker model, which was later generalized by Casselman--Shalika \cite{CasselmanShalika1980} to arbitrary quasi-split connected reductive groups. Sakellaridis \cite{Sakellaridis2013} and Sakellaridis--Venkatesh \cite{SakellaridisVenkatesh2017} have developed a framework which provides formulas for values attached to spherical elements as such for spherical varieties. However, this framework is not applicable in our situation, since $(k,c)$ models do not fall into the framework of spherical varieties for $k, c > 1$, unless $k = 2$ or $\left(k, c\right) = \left(3, 2\right)$ (see the introduction of \cite{LapidMao2020}).

Our result is very similar to our recent results with Oded Carmon regarding Bessel--Speh functions for Speh representations over finite fields \cite{CarmonZelingher2024}, and our method of proof is similar to \cite{Zelingher2023}. This suggests that there is a deep connection between Speh representations and modified Hall--Littlewood polynomials. The proof of our main result (\Cref{thm:main-result}) is very short. The reason for this is because we rely on difficult computations done previously. Besides the unramified computation of the Ginzburg--Kaplan integral mentioned above (which in turn uses Shintani's formula \cite{shintani1976explicit}), we rely on Macdonald's formula \cite{Casselman1980} for spherical matrix coefficients and on the Cauchy identity for Schur functions. It would be interesting to give a more direct proof of this statement by showing that these special values of this spherical element satisfy certain recurrence relations satisfied by modified Hall--Littlewood polynomials. We also mention that when $c=1$, our result does not recover the full Shintani formula, but only a special case of the formula (see \Cref{rem:c-equals-1-shintani-formula-special-case}). It would be interesting to give a more general formula for general $k$ and $c$ that recovers the full Shintani formula for $c=1$. We plan to return to both of these points in the future.

\section{Preliminaries}\label{sec:preliminaries}

\subsection{Hall--Littlewood polynomials}\label{sec:hall-littlewood-polynomials}

The purpose of this section is to recall the definitions of several variants of Hall--Littlewood polynomials.

By a partition $\complexTuple{\lambda}$ of a non-negative integer $m$, we mean a weakly decreasing sequence of positive integers $\complexTuple{\lambda} = (\lambda_1,\dots,\lambda_{\ell})$ such that $\lambda_1 + \dots + \lambda_{\ell} = m$. The integer $\ell$ is called \emph{the length of the partition $\complexTuple{\lambda}$}. We write $\complexTuple{\lambda} \vdash m$ to indicate that $\complexTuple{\lambda}$ is a partition of $m$, and denote $\sizeof{\lambda} = m$.

Let $n$ be a positive integer, and let $\SymmetricGroup_n$ be the symmetric group on $n$ elements. The group $\SymmetricGroup_n$ acts on the space of polynomials in complex coefficients $\cComplex\left[\complexTuple{x}\right] = \cComplex\left[x_1,\dots,x_n\right]$ in the variables $\complexTuple{x} = \left(x_1,\dots,x_n\right)$ by permuting the variables. Consider the subspace $\cComplex\left[\complexTuple{x}\right]^{\SymmetricGroup_n}$ consisting of symmetric polynomials. 

Let $\complexTuple{\lambda} = \left(\lambda_1,\dots,\lambda_\ell\right) \vdash m$ be a partition such that $\ell \le n$. The \emph{Hall--Littlewood polynomial}  $\pHallLittlewood_{\complexTuple{\lambda}}\left(\complexTuple{x}; t\right)$ is defined by the formula (see \cite[Definition 3.4.1]{Haiman2003} and \cite[Page 208]{macdonald1998symmetric}) $$\pHallLittlewood_{\complexTuple{\lambda}}\left(x; t\right) = \frac{1}{\prod_{i=0}^{\infty} \qAnalog{\alpha_i}{t}!} \sum_{\sigma \in \SymmetricGroup_n} \sigma\left( \complexTuple{x}^{\complexTuple{\lambda}} \frac{\prod_{i < j} \left(1 - t \frac{x_j}{x_i}\right)}{\prod_{i < j} \left(1 - \frac{x_j}{x_i}\right)} \right).$$
Here, for $j \ge 1$, $\alpha_j = \complexTuple{\lambda}\left(j\right)$ is the number of times that $j$ appears in $\complexTuple{\lambda}$, and $\alpha_0$ is defined so that $\sum_{j = 0}^{\infty} \alpha_j = n$. The symbol $\complexTuple{x}^{\complexTuple{\lambda}}$ means $x_1^{\lambda_1} x_2^{\lambda_2} \dots x_\ell^{\lambda_\ell}$ and \begin{align*}
	 \qAnalog{n}{t}! = \prod_{j=1}^n \qAnalog{j}{t}\,\,\,\,\,\,\,\text{where}\,\,\,\,\,\,\,\qAnalog{k}{t} = \prod_{j = 1}^k \frac{1 - t^j}{1 - t}.
\end{align*}
We also denote $$\qHallLittlewood_{\complexTuple{\lambda}}\left(\complexTuple{x};t\right) = \left(\left(1 - t\right)^{\ell} \prod_{i=1}^{\infty} \qAnalog{\complexTuple{\lambda}\left(i\right)}{t}!\right) \pHallLittlewood_{\complexTuple{\lambda}}\left(\complexTuple{x};t\right).$$

For any partition $\complexTuple{\lambda}$, we may associate a symmetric polynomial $s_{\complexTuple{\lambda}}$ called the \emph{Schur polynomial} (see \cite[Page 40]{macdonald1998symmetric}). It is given by $s_{\complexTuple{\lambda}}\left(\complexTuple{x}\right) = \pHallLittlewood_{\complexTuple{\lambda}}\left(\complexTuple{x}; 0\right)$, where if $\complexTuple{\lambda}$ is a partition of length greater than $n$, then $s_{\complexTuple{\lambda}}\left(\complexTuple{x}\right) = 0$. Schur polynomials form a basis for the space $\cComplex\left[\complexTuple{x}\right]^{\SymmetricGroup_n}$. We equip this space with the bilinear pairing $\standardForm{\cdot}{\cdot}$ defined so that the Schur polynomials $s_{\complexTuple{\lambda}}$ are orthonormal with respect to this pairing, i.e., for any two partitions $\complexTuple{\lambda}$ and $\complexTuple{\mu}$, $$\standardForm{s_{\complexTuple{\lambda}}}{s_{\complexTuple{\mu}}} = \delta_{\complexTuple{\lambda},\complexTuple{\mu}} =\begin{dcases}
	0 & \text{if }\complexTuple{\lambda} \ne \complexTuple{\mu,}\\
	1 & \text{if }\complexTuple{\lambda} = \complexTuple{\mu}
\end{dcases},$$ where is $\delta_{\complexTuple{\lambda},\complexTuple{\mu}}$ Kronecker's delta function.

We will need the \emph{transformed Hall--Littlewood polynomial} $\hHallLittlewood_{\complexTuple{\mu}}\left(x;t\right)$ defined for a partition $\complexTuple{\mu}$  (see also \cite[Section 3.4.2]{Haiman2003}). For any $j \ge 1$ let $p_{j}$ be $j$-th power sum polynomial, i.e., $$p_{j}\left(\complexTuple{x}\right) = x_1^j + \dots + x_n^j.$$
The ring $\cComplex\left[\complexTuple{x}\right]^{\SymmetricGroup_n}$ is a free $\cComplex$-algebra with generators $\left(p_j\right)_{j=1}^{\infty}$. The transformed Hall--Littlewood polynomial $\hHallLittlewood_{\complexTuple{\mu}}\left(\complexTuple{x};t\right)$ is defined as the image of $\qHallLittlewood_{\complexTuple{\mu}}\left(\complexTuple{x};t\right)$ under the homomorphism of rings that maps $p_j$ to $\left(1-t^j\right)^{-1} p_j$ for every $j$. In plethystic notation,
$$\hHallLittlewood_{\complexTuple{\mu}}\left(\complexTuple{x};t\right) = \qHallLittlewood_{\complexTuple{\mu}}\left[\frac{X}{1-t};t\right].$$

By \cite[Corollary 3.4.12]{Haiman2003}, the expansion of the transformed Hall--Littlewood polynomial with respect to the basis of Schur polynomials is given by \begin{equation}\label{eq:expansion-of-transformed-hall-littlewood-in-schur-basis}
	\hHallLittlewood_{\complexTuple{\mu}}\left(\complexTuple{x};t\right) = \sum_{\complexTuple{\lambda}} \kostkaFoulkes_{\complexTuple{\lambda}, \complexTuple{\mu}}\left(t\right) s_{\complexTuple{\lambda}}\left(\complexTuple{x}\right),
\end{equation}
where $\complexTuple{\lambda}$ runs over all partitions of $m = \sizeof{\complexTuple{\mu}}$, and $\kostkaFoulkes_{\complexTuple{\lambda},\complexTuple{\mu}}\left(t\right)$ is the Kostka--Foulkes polynomial (see \cite[Page 242]{macdonald1998symmetric} and \cite[Section 3.4.1]{Haiman2003}). By \cite[Corollary 3.4.10]{Haiman2003}, we have that with respect to the pairing defined above, the bases $\pHallLittlewood_{\complexTuple{\lambda}}$ and $\hHallLittlewood_{\complexTuple{\mu}}$ are dual, that is,
$$\standardForm{\pHallLittlewood_{\complexTuple{\lambda}}}{\hHallLittlewood_{\complexTuple{\mu}}} = \delta_{\complexTuple{\lambda},\complexTuple{\mu}}.$$

We will also need the \emph{modified Hall--Littlewood polynomial} (see also \cite[Section 3.4.7]{Haiman2003}). It is defined by the formula
\begin{equation}\label{eq:modified-hall-littlewood-definition}
	\htHallLittlewood_{\complexTuple{\mu}}\left(\complexTuple{x}; t\right) = t^{\nOfPartition\left(\complexTuple{\mu}\right)} \hHallLittlewood_{\complexTuple{\mu}}\left(\complexTuple{x}; t^{-1}\right),
\end{equation}
where for a partition $\complexTuple{\lambda} = \left(\lambda_1,\dots,\lambda_{\ell}\right)$, $$\nOfPartition\left(\complexTuple{\lambda}\right) = \sum_{j=1}^{\ell} \left(j-1\right) \lambda_j.$$

\subsubsection{Formula for modified Hall--Littlewood polynomials in terms of flags fixed by Jordan matrices}

Let $q$ be a prime power. When $t = q$, we have a formula expressing $\htHallLittlewood_{\complexTuple{\mu}}\left(\complexTuple{x}; q\right)$ in terms of flags fixed by Jordan matrices. 

Let $\finiteField_q$ be a finite field with $q$ elements. Given a partition $\complexTuple{\mu} \vdash m$, let $J_{\complexTuple{\mu}}\left(1\right) \in \GL_m\left(\finiteField_q\right)$ be the unipotent Jordan matrix of type $\complexTuple{\mu}$ (with the convention that $J_{\left(1^m\right)}\left(1\right) = \IdentityMatrix{m}$). We have the following formula for the modified Hall--Littlewood polynomial (see \cite[Lemma 2.8]{Ram2023}):
$$\htHallLittlewood_{\complexTuple{\mu}}\left(\complexTuple{x}; q\right) = \sum_{\complexTuple{\lambda} \vdash m} \#\left\{ \mathcal{F} \text{ flag in } \finiteField_q^m \text{ of type } \complexTuple{\lambda} \mid J_{\complexTuple{\mu}}\left(1\right) \mathcal{F} = \mathcal{F} \right\} \cdot m_{\complexTuple{\lambda}}\left(\complexTuple{x}\right),$$
where $m_{\complexTuple{\lambda}}\left(\complexTuple{x}\right)$ is the monomial symmetric polynomial corresponding to $\complexTuple{\lambda}$.
\begin{remark}\label{rem:homogeneous-polynomial-case-hall-littlewood}
	In the special case where $\complexTuple{\mu} = \left(m\right)$ is a partition of length $1$, we have that for every $0 \le j \le m$ there exists a unique subspace of $\finiteField_q^m$ of dimension $j$ that is invariant under $J_{(m)}\left(1\right)$, and thus a unique flag of type $\complexTuple{\lambda}$ invariant under $J_{(m)}\left(1\right)$ for every partition $\complexTuple{\lambda}$. Hence, we have $$\htHallLittlewood_{\left(m\right)}\left(\complexTuple{x}; q\right) = \sum_{\complexTuple{\lambda} \vdash m} m_{\complexTuple{\lambda}}\left(\complexTuple{x}\right) = h_m\left( \complexTuple{x} \right),$$
	where $h_m$ is the complete homogeneous polynomial of degree $m$.
\end{remark}
\begin{remark}\label{rem:k-equals-one-case-hall-littlewood}
	In the special case where $n=1$, the only partition $\complexTuple{\lambda}$ for which $m_{\complexTuple{\lambda}}$ is non-zero is $\lambda = (m)$, where $\sizeof{\complexTuple{\mu}} = m$. Hence in this case, $$\htHallLittlewood_{\complexTuple{\mu}}\left(\complexTuple{x}; q\right) = m_{(\sizeof{\complexTuple{\mu}})}\left(x_1\right) = x_1^{\sizeof{\complexTuple{\mu}}}.$$
\end{remark}

\subsection{Unramified representations and spherical vectors}\label{sec:unramified-representations-and-spherical-vectors}

Let $\localField$ be a non-archimedean local field with ring of integers $\ringOfIntegers$, maximal ideal $\maximalIdeal \subset \ringOfIntegers$, and residue field of cardinality $q = \sizeof{\ringOfIntegers \slash \maximalIdeal}$. Let $\uniformizer \in \maximalIdeal$ be a uniformizer, and normalize the absolute value on $\localField$ such that $\abs{\uniformizer} = \frac{1}{q}$.

Throughout the article, when referring to representations of $\GL_n\left(\localField\right)$, we will always mean smooth admissible representations. Let $B_n \subset \GL_n\left(\localField\right)$ be the upper triangular Borel subgroup, and let $\maximalCompact_n = \GL_n\left(\ringOfIntegers\right)$.

Given complex numbers $z_1, \dots, z_n$, denote $\complexTuple{z} =\left(z_1,\dots,z_n\right)$. Consider the (normalized) parabolically induced representation $$\principalSeries{\complexTuple{z}} = \Ind{B_n}{\GL_n\left(\localField\right)}{\abs{\cdot}^{z_1} \boxtimes \dots \boxtimes \abs{\cdot}^{z_n}}.$$
We call $\principalSeries{\complexTuple{z}}$ an \emph{unramified principal series representation} with parameter $\complexTuple{z}$, or just an \emph{unramified representation} with parameter $\complexTuple{z}$.

By \cite[Section 4.2]{BernsteinZelevinsky1977}, the representation $\principalSeries{\complexTuple{z}}$ is irreducible if and only if $z_i \ne z_j + 1  \left(\bmod \frac{2 \pi i}{\log q} \zIntegers \right)$ for every $i,j$. The contragredient representation of $\principalSeries{\complexTuple{z}}$ is $\principalSeries{-\complexTuple{z}}$. The pairing $\principalSeries{\complexTuple{z}} \times \principalSeries{-\complexTuple{z}} \to \cComplex$ is realized via the formula $$\standardForm{f_{\principalSeries{\complexTuple{z}}}}{f_{\principalSeries{-\complexTuple{z}}}} = \int_{\maximalCompact_n} f_{\principalSeries{\complexTuple{z}}}\left(k_0\right) f_{\principalSeries{-\complexTuple{z}}}\left(k_0\right) \mdifferential k_0,$$
where we normalize the Haar measure so that $\maximalCompact_n$ has volume $1$.

An irreducible representation $\pi$ of $\GL_n\left(\localField\right)$ is called \emph{spherical} if it admits a non-zero $\maximalCompact_n$-fixed vector, i.e., if there exists $0 \ne v \in \pi$ such that $\pi\left(k_0\right) v = v$ for every $k_0 \in \maximalCompact_n$. Such vector is called a \emph{spherical vector}. It is well-known that if $\pi$ is spherical, then its spherical vector is unique, up to scalar multiplication. It is also well-known that if $\pi$ is spherical, then $\pi$ can be realized as a subrepresentation of $\principalSeries{\complexTuple{z}}$ for some $\complexTuple{z}$.

By the Iwasawa decomposition, the representation $\principalSeries{\complexTuple{z}}$ always admits a unique spherical vector with the property that its value at $\IdentityMatrix{n}$ is $1$. We denote this element by $\SphericalSection{\complexTuple{z}}$.

\subsubsection{Macdonald's formula for spherical matrix coefficients}
Given a choice of $\complexTuple{z}$, we define the \emph{normalized spherical matrix coefficient} by the formula $$\SphericalFunction{\complexTuple{z}}\left(g\right) = \standardForm{\principalSeries{\complexTuple{z}}\left(g\right) \SphericalSection{\complexTuple{z}}}{\SphericalSection{-\complexTuple{z}}},$$
where $g \in \GL_n\left(\localField\right)$. It follows from the definitions that $\SphericalFunction{\complexTuple{z}}$ is $\maximalCompact_n$-bi-invariant and that it is $\left(\multiplicativegroup{\localField}, \abs{\cdot}^{\sum_{i=1}^n z_i}\right)$ bi-equivariant (where $\multiplicativegroup{\localField}$ is realized as a subgroup of $\GL_n\left(\localField\right)$ via the embedding $x \mapsto x \IdentityMatrix{n}$). By the Cartan decomposition, it suffices to know the values of $\SphericalFunction{\complexTuple{z}}$ on elements of the form $\uniformizer^{\left(i_1,\dots,i_n\right)} \coloneqq \diag\left(\uniformizer^{i_1},\dots,\uniformizer^{i_n}\right),$ where $i_1 \ge \dots \ge i_n \ge 0$. For a partition $\complexTuple{\lambda} = \left(\lambda_1,\dots,\lambda_\ell\right)$ of length $\ell \le n$, we define $\uniformizer^{\complexTuple{\lambda}} = \uniformizer^{(\lambda_1,\dots,\lambda_{\ell})}$ and $\uniformizer^{\complexTuple{\lambda}, n} = \left(\begin{smallmatrix}
	\uniformizer^{\complexTuple{\lambda}}\\
	& \IdentityMatrix{n-\ell}
\end{smallmatrix}\right)$. The following formula for the value $\SphericalFunction{\complexTuple{z}}\left(\uniformizer^{\complexTuple{\lambda}, n}\right)$ is due to Macdonald \cite[Page 299]{macdonald1998symmetric}, \cite{Casselman1980}. Recall that we normalized the Haar measure so that $\maximalCompact_n$ has volume $1$.
\begin{theorem}\label{thm:macdonald-formula}
	For any partition $\complexTuple{\lambda}$ of length $\ell \le n$ and any choice of $\complexTuple{z} =\left(z_1,\dots,z_n\right)$, we have
	$$\VolumeOf\left( \maximalCompact_n \uniformizer^{\complexTuple{\lambda}, n} \maximalCompact_n \right) \cdot \SphericalFunction{\complexTuple{z}}\left(\uniformizer^{\complexTuple{\lambda}}\right) = q^{\frac{\left(n-1\right) \cdot \sizeof{\complexTuple{\lambda}}}{2} -\nOfPartition\left(\complexTuple{\lambda}\right)} \cdot \pHallLittlewood_{\complexTuple{\lambda}}\left(q^{-z_1},\dots,q^{-z_n}; q^{-1}\right).$$
\end{theorem}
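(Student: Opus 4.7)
The plan is to follow Casselman's approach via Jacquet modules and asymptotic expansions. Since $\SphericalSection{-\complexTuple{z}}$ is identically $1$ on $\maximalCompact_n$, the pairing formula in \Cref{sec:unramified-representations-and-spherical-vectors} gives
$$\SphericalFunction{\complexTuple{z}}\left(g\right) = \int_{\maximalCompact_n} \SphericalSection{\complexTuple{z}}\left(k_0 g\right) \mdifferential k_0.$$
Both sides of the desired identity depend holomorphically on $\complexTuple{z}$, so it suffices to work on the dense locus where $\principalSeries{\complexTuple{z}}$ is irreducible and the characters $w \cdot \complexTuple{z}$ ($w \in \SymmetricGroup_n$) are pairwise distinct. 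On this locus the Jacquet module of $\principalSeries{\complexTuple{z}}$ with respect to $\UnipotentRadical$, the unipotent radical of $B_n$, is semisimple of length $n!$, with the torus acting on the $w$-summand by $\abs{\cdot}^{w \cdot \complexTuple{z}}$.

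The key input is Casselman's asymptotic theorem: for a $\maximalCompact_n$-fixed vector $v$ in a smooth admissible representation and $a = \uniformizer^{\complexTuple{\lambda}, n}$ with $\lambda_1 > \dots > \lambda_n$ sufficiently deep in the positive chamber, one has $v(a) = \delta_{B_n}^{1/2}(a) \, \overline{v}(a)$, where $\overline{v}$ denotes the image of $v$ in the Jacquet module. The $w$-component of $\overline{\SphericalSection{\complexTuple{z}}}$ is computed by applying the standard intertwining operator $M_w$ to $\SphericalSection{\complexTuple{z}}$, factoring $M_w$ into a product of rank-one operators (one per inversion of $w$), and invoking the Gindikin--Karpelevich formula; the result is
$$c_w\left(\complexTuple{z}\right) = \prod_{\substack{1 \le i < j \le n \\ w^{-1}(i) > w^{-1}(j)}} \frac{1 - q^{-1} q^{z_i - z_j}}{1 - q^{z_i - z_j}}.$$
Plugging back into the integral formula and unwinding the Iwasawa decomposition yields, for strictly decreasing $\complexTuple{\lambda}$,
$$\SphericalFunction{\complexTuple{z}}\left(\uniformizer^{\complexTuple{\lambda}}\right) = \delta_{B_n}^{1/2}\left(\uniformizer^{\complexTuple{\lambda},n}\right) \sum_{\sigma \in \SymmetricGroup_n} \sigma\left(\complexTuple{x}^{\complexTuple{\lambda}} \prod_{i<j} \frac{1 - q^{-1} x_j / x_i}{1 - x_j/x_i}\right)\bigg|_{x_i = q^{-z_i}}.$$

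Comparing with the defining formula for $\pHallLittlewood_{\complexTuple{\lambda}}$ in \Cref{sec:hall-littlewood-polynomials}, the symmetrized sum on the right equals $\prod_{j \ge 0} \qAnalog{\alpha_j}{q^{-1}}! \cdot \pHallLittlewood_{\complexTuple{\lambda}}\left(q^{-z_1},\dots,q^{-z_n}; q^{-1}\right)$; the prefactor absorbs the stabilizer of $\complexTuple{\lambda}$ in $\SymmetricGroup_n$, which also allows one to pass from the strictly decreasing case to arbitrary partitions (by continuity, since both sides are symmetric Laurent polynomials in $q^{-z_i}$). A direct computation of $\VolumeOf\left(\maximalCompact_n \uniformizer^{\complexTuple{\lambda},n} \maximalCompact_n\right)$ via the Iwahori/Bruhat decomposition of $\GL_n\left(\localField\right)$, combined with the identity $\delta_{B_n}^{-1/2}\left(\uniformizer^{\complexTuple{\lambda},n}\right) = q^{\frac{(n-1)\sizeof{\complexTuple{\lambda}}}{2} - \nOfPartition\left(\complexTuple{\lambda}\right)}$ (immediate from $\delta_{B_n}\left(\diag\left(a_1,\dots,a_n\right)\right) = \prod_{i<j} \abs{a_i/a_j}$), yields the stated formula after straightforward bookkeeping.

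The main obstacle is establishing Casselman's asymptotic theorem and the Gindikin--Karpelevich identity for the Jacquet-module image of $\SphericalSection{\complexTuple{z}}$: these are the technical heart of Casselman's treatment, requiring careful analysis of the Bruhat geometry together with rank-one reductions. Once they are in hand, the combinatorial identification with $\pHallLittlewood_{\complexTuple{\lambda}}$ and the volume bookkeeping are essentially routine.
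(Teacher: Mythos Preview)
The paper does not give a proof of \Cref{thm:macdonald-formula}; it is stated as a known result with citations to Macdonald \cite[Page 299]{macdonald1998symmetric} and Casselman \cite{Casselman1980}, and is used as a black box in the proof of \Cref{thm:main-result}. Your sketch is precisely an outline of Casselman's argument, so in that sense you are reproducing one of the cited references rather than diverging from anything in the paper.

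One concrete slip: your displayed formula for $\SphericalFunction{\complexTuple{z}}\left(\uniformizer^{\complexTuple{\lambda}}\right)$ is missing the overall normalization factor $1/W(q^{-1})$, where $W(t)=\sum_{w\in\SymmetricGroup_n} t^{\ell(w)}=\qAnalog{n}{t}!$ is the Poincar\'e polynomial of $\SymmetricGroup_n$. The Gindikin--Karpelevich factors $c_w$ you wrote down are the eigenvalues of the intertwining operators on spherical vectors, but they do not by themselves give the coefficients in the Jacquet-module expansion of $\SphericalSection{\complexTuple{z}}$; one still has to normalize so that $\SphericalFunction{\complexTuple{z}}(1)=1$, and evaluating your symmetrized sum at $\complexTuple{\lambda}=0$ gives $W(q^{-1})$, not $1$. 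With the corrected formula
\[
\SphericalFunction{\complexTuple{z}}\left(\uniformizer^{\complexTuple{\lambda}}\right)=\frac{\delta_{B_n}^{1/2}\left(\uniformizer^{\complexTuple{\lambda},n}\right)}{W(q^{-1})}\sum_{\sigma\in\SymmetricGroup_n}\sigma\left(\complexTuple{x}^{\complexTuple{\lambda}}\prod_{i<j}\frac{1-q^{-1}x_j/x_i}{1-x_j/x_i}\right),
\]
together with the volume formula $\VolumeOf\left(\maximalCompact_n\uniformizer^{\complexTuple{\lambda},n}\maximalCompact_n\right)=\delta_{B_n}^{-1}\left(\uniformizer^{\complexTuple{\lambda},n}\right)\cdot W(q^{-1})/W_{\complexTuple{\lambda}}(q^{-1})$ (where $W_{\complexTuple{\lambda}}(t)=\prod_{j\ge 0}\qAnalog{\alpha_j}{t}!$), the product does collapse to the claimed $\delta_{B_n}^{-1/2}\cdot\pHallLittlewood_{\complexTuple{\lambda}}$. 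Your ``straightforward bookkeeping'' would have to catch and correct this, but as written the displayed identity is off by that factor.
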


\subsection{\texorpdfstring{$(k,c)$}{(k,c)} functionals}

Let $\fieldCharacter \colon \localField \to \multiplicativegroup{\cComplex}$ be a non-trivial character with conductor $\ringOfIntegers$, i.e., $\fieldCharacter$ is trivial on $\ringOfIntegers$ but not on $\uniformizer^{-1} \ringOfIntegers$. Let $k$ and $c$ be positive integers. Let $\UnipotentRadicalForWss{k}{c}$ be the unipotent radical of the standard parabolic subgroup $\ParabolicForWss{k}{c}$ of $\GL_{kc}\left(\localField\right)$ corresponding to the composition $(c^k)$. We define a character $\fieldCharacterkc{k}{c} \colon \UnipotentRadicalForWss{k}{c} \to \multiplicativegroup{\cComplex}$ by the formula $$\fieldCharacterkc{k}{c} \begin{pmatrix}
	\IdentityMatrix{c} & X_1 & \ast & \ast  & \ast \\
	& \IdentityMatrix{c} & X_2 & \ast & \ast \\
	& & \ddots & \ddots & \ast  \\
	& & & \IdentityMatrix{c} &  X_{k-1} \\
	& & & & \IdentityMatrix{c}
\end{pmatrix} = \fieldCharacter\left( \sum_{j=1}^{k-1} \trace X_j \right).$$
Let $\pi$ be an irreducible representation of $\GL_{kc}\left(\localField\right)$. A \emph{$(k,c)$ functional of $\pi$} is a non-zero functional $\Lambda$ on the space of $\pi$ such that for any $u \in \UnipotentRadicalForWss{k}{c}$ and any $v \in \pi$, $$\standardForm{\pi(u) v}{\Lambda} = \fieldCharacterkc{k}{c}\left(u\right) \standardForm{v}{\Lambda}.$$

Suppose that $\pi$ admits a $(k,c)$ functional, and that its $(k,c)$ functional is unique (up to scalar multiplication). By Frobenius reciprocity there exists a unique subspace $\Whittaker\left(\pi, \fieldCharacterkc{k}{c}\right)$ of $\Ind{\UnipotentRadicalForWss{k}{c}}{\GL_{kc}\left(\localField\right)}{\fieldCharacterkc{k}{c}}$ that is isomorphic to $\pi$. We call $\Whittaker\left(\pi, \fieldCharacterkc{k}{c}\right)$ \emph{the $(k,c)$ model of $\pi$}.

\subsubsection{Generic representations}
When $c=1$, we denote $U_k = \UnipotentRadicalForWss{k}{1}$ and $\fieldCharacter = \fieldCharacterkc{k}{1}$. In this case, we simply say that $\Lambda$ is a $\fieldCharacter$-Whittaker functional, instead of saying that $\Lambda$ is a $(k, 1)$ functional. An irreducible representation $\pi$ is called \emph{generic} if it admits a $\fieldCharacter$-Whittaker functional, in which case it is well-known that it is unique (up to scalar multiplication). This notion is independent of the choice of the additive character $\fieldCharacter \colon \localField \to \multiplicativegroup{\cComplex}$.

\subsubsection{Shintani and Casselman--Shalika formula}
By \cite[Theorem 4.11]{BernsteinZelevinsky1977}, for any $\complexTuple{z} =\left(z_1,\dots,z_k\right)$ such that the representation $\principalSeries{\complexTuple{z}}$ is irreducible, $\principalSeries{\complexTuple{z}}$ is generic. Given such $\complexTuple{z}$, it turns out that any spherical vector in $\Whittaker\left(\principalSeries{\complexTuple{z}}, \fieldCharacter\right)$ does not vanish at $\IdentityMatrix{k}$. Let $\WhittakerSpherical{\complexTuple{z}} \in \Whittaker\left(\principalSeries{\complexTuple{z}}, \fieldCharacter\right)$ be the unique spherical vector with value $1$ at $\IdentityMatrix{k}$. Since $\WhittakerSpherical{\complexTuple{z}}\left(g\right)$ is $\left(\UnipotentSubgroup_k \rtimes \multiplicativegroup{\localField}, \fieldCharacter \otimes \abs{\cdot}^{\sum_{i=1}^k z_i}\right)$-equivariant from the left and $\maximalCompact_k$-invariant from the right, by the Iwasawa decomposition it suffices to know the values of $\WhittakerSpherical{\complexTuple{z}}$ on elements of the form $\uniformizer^{(i_1,\dots,i_k)}$ where $i_k \ge 0$. Such a formula was given by Shintani \cite{shintani1976explicit} and then generalized by Casselman--Shalika \cite{CasselmanShalika1980} for arbitrary quasi-split connected reductive groups.

\begin{theorem}\label{thm:shintani-formula}
	Let $\complexTuple{z}$ be such that $z_i \ne z_j + 1  \left(\bmod \frac{2 \pi i}{\log q} \zIntegers \right)$. The value $\WhittakerSpherical{\complexTuple{z}}\left(\uniformizer^{(i_1,\dots,i_k)}\right)$ is zero unless $i_1 \ge \dots \ge i_k$. For any partition $\complexTuple{\lambda}$ of length $\ell \le k$ 
	$$\WhittakerSpherical{\complexTuple{z}}\left(\uniformizer^{\complexTuple{\lambda}, k}\right) = \delta^{\frac{1}{2}}_{B_k}\left(\uniformizer^{\complexTuple{\lambda}, k}\right) \cdot s_{\complexTuple{\lambda}}\left(q^{-z_1},\dots,q^{-z_k}\right).$$
\end{theorem}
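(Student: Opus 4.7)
I would prove \Cref{thm:shintani-formula} by the classical Casselman--Shalika method, handling the vanishing statement first and then computing the special values via the Jacquet integral realization of the Whittaker functional on the principal series.

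\textit{Vanishing.} The function $\WhittakerSpherical{\complexTuple{z}}$ is left $(\UnipotentSubgroup_k, \fieldCharacter)$-equivariant and right $\maximalCompact_k$-invariant. For $a = \uniformizer^{(i_1,\dots,i_k)}$, a simple position $j$, and any $x \in \ringOfIntegers$, the element $\IdentityMatrix{k} + x E_{j,j+1}$ lies in $\UnipotentSubgroup_k \cap \maximalCompact_k$, while $a(\IdentityMatrix{k} + x E_{j,j+1}) = (\IdentityMatrix{k} + x \uniformizer^{i_j - i_{j+1}} E_{j,j+1}) a$. Combining the two invariances forces the identity $\WhittakerSpherical{\complexTuple{z}}(a) = \fieldCharacter(x \uniformizer^{i_j - i_{j+1}}) \WhittakerSpherical{\complexTuple{z}}(a)$. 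If $i_{j+1} > i_j$ for some $j$, then $\uniformizer^{i_j - i_{j+1}} \ringOfIntegers$ properly contains $\ringOfIntegers$, so the conductor hypothesis on $\fieldCharacter$ provides some $x \in \ringOfIntegers$ for which the character factor is not $1$, forcing $\WhittakerSpherical{\complexTuple{z}}(a) = 0$.

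\textit{Jacquet integral and Gindikin--Karpelevich.} Since $\principalSeries{\complexTuple{z}}$ is irreducible, its unique $\fieldCharacter$-Whittaker functional is given, up to a normalizing constant $C(\complexTuple{z})$, by the Jacquet integral $\ell(f) = \int_{\UnipotentSubgroup_k} f(w_0 u) \fieldCharacter^{-1}(u) \, du$, where $w_0$ is the long Weyl element (convergent in a suitable cone in $\complexTuple{z}$ and then analytically continued). I would then write
\[
\WhittakerSpherical{\complexTuple{z}}(\uniformizer^{\complexTuple{\lambda}, k}) = C(\complexTuple{z})^{-1} \int_{\UnipotentSubgroup_k} \SphericalSection{\complexTuple{z}}(w_0 u \uniformizer^{\complexTuple{\lambda}, k}) \fieldCharacter^{-1}(u) \, du,
\]
perform the substitution $u \mapsto \uniformizer^{\complexTuple{\lambda}, k} u \uniformizer^{-\complexTuple{\lambda}, k}$ (picking up $\delta_{B_k}(\uniformizer^{\complexTuple{\lambda},k})$ from the Jacobian), break the resulting integral over $\UnipotentSubgroup_k$ according to the Bruhat decomposition, and apply the Gindikin--Karpelevich formula
\[
M(w)\SphericalSection{\complexTuple{z}} = \prod_{\alpha > 0,\, w\alpha < 0} \frac{1 - q^{-1-\alpha(\complexTuple{z})}}{1 - q^{-\alpha(\complexTuple{z})}} \cdot \SphericalSection{w\complexTuple{z}}
\]
to evaluate the contribution of each cell.

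\textit{Identification with the Schur polynomial.} The above converts the integral into a sum indexed by $\SymmetricGroup_k$, and the normalization $C(\complexTuple{z})$ works out to be $\prod_{\alpha > 0}(1 - q^{-1-\alpha(\complexTuple{z})})$, so that after cancellation one obtains
\[
\WhittakerSpherical{\complexTuple{z}}(\uniformizer^{\complexTuple{\lambda}, k}) = \delta_{B_k}^{\frac{1}{2}}(\uniformizer^{\complexTuple{\lambda}, k}) \cdot \frac{\det \left( q^{-z_i (\lambda_j + k - j)} \right)_{1 \le i, j \le k}}{\det \left( q^{-z_i (k - j)} \right)_{1 \le i, j \le k}},
\]
where $\complexTuple{\lambda}$ is extended to length $k$ by appending zeros. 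The ratio of determinants is exactly the Weyl character formula expression for $s_{\complexTuple{\lambda}}(q^{-z_1}, \dots, q^{-z_k})$, yielding the stated identity. The main technical obstacle is the combinatorial step passing from the Jacquet integral to the Weyl-group sum: one must identify which Bruhat cells carry non-trivial twists by $\fieldCharacter^{-1}$ after the conjugation by $\uniformizer^{\complexTuple{\lambda}, k}$ (this is where the dominance of $\complexTuple{\lambda}$ is used), and then verify the delicate cancellation between the Gindikin--Karpelevich numerators and the Weyl denominator that leaves precisely the alternant above.
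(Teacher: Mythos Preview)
The paper does not prove \Cref{thm:shintani-formula}; it is stated as a known result and attributed to Shintani \cite{shintani1976explicit} and Casselman--Shalika \cite{CasselmanShalika1980}. Your plan is precisely the classical Casselman--Shalika argument from those references: the vanishing step is the standard commutation trick (your computation $a(\IdentityMatrix{k}+xE_{j,j+1})=(\IdentityMatrix{k}+x\uniformizer^{i_j-i_{j+1}}E_{j,j+1})a$ is correct, and the conductor hypothesis on $\fieldCharacter$ is exactly what is needed), and the evaluation via the Jacquet integral, Bruhat decomposition, and Gindikin--Karpelevich is the textbook route to the Weyl-character alternant. So your approach is correct and coincides with the literature the paper cites; the paper itself offers nothing to compare against beyond the reference. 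The only caveat is that your sketch, as you acknowledge, leaves the combinatorial bookkeeping (which Bruhat cells survive the $\fieldCharacter^{-1}$-twist after conjugation by $\uniformizer^{\complexTuple{\lambda},k}$, and the precise cancellation of Gindikin--Karpelevich factors against the normalizing constant) at the level of an outline; a complete write-up would require carrying this out, e.g.\ along the lines of \cite{CasselmanShalika1980}.
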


\subsection{Speh representations of unramified principal series representations}
Let $\tau$ be an irreducible generic representation of $\GL_k\left(\localField\right)$. In \cite{CaiFriedbergGourevitchKaplan2023}, Cai--Friedberg--Gourevitch--Kaplan defined the notion of the generalized Speh representation $\SpehRepresentation{\tau}{c}$, for any $c \ge 1$. They proved that the representation $\SpehRepresentation{\tau}{c}$ is a $(k,c)$ representation, which in particular means that it admits a unique (up to scalar multiplication) $(k,c)$ functional. They proved in \cite[Lemma 12]{CaiFriedbergGourevitchKaplan2023} that for any $W \in \Whittaker\left(\SpehRepresentation{\tau}{c}, \fieldCharacterkc{k}{c}\right)$, any $g \in \GL_c\left(\localField\right)$ and any $h \in \GL_{kc}\left(\localField\right)$, $$W\left(\diag^{k}\left(g\right) h\right)  = \centralCharacter{\tau}\left(\det g\right) W\left(h\right),$$
where $\centralCharacter{\tau}$ is the central character of $\tau$ and $$\diag^k\left(g\right) = \diag \left(g,\dots,g\right) \in \GL_{kc}\left(\localField\right).$$

We will focus on the case of \emph{irreducible unramified Speh representations}, which we explain now. Let $\tau = \principalSeries{\complexTuple{z}}$, where \begin{equation}\label{eq:condition-for-speh-representation-to-be-irreducible}
	z_i - z_j \ne \pm 1, \pm 2,\dots, \pm c \left(\bmod \frac{2 \pi i}{\log q} \zIntegers \right)
\end{equation} for every $i, j$ (notice that from \Cref{sec:unramified-representations-and-spherical-vectors}, this implies in particular that $\principalSeries{\complexTuple{z}}$ is irreducible). In this case, the Speh representation $\SpehRepresentation{\principalSeries{\complexTuple{z}}}{c}$ is the parabolically induced representation \begin{equation}\label{eq:speh-representation-as-parabolically-induced}
\SpehRepresentation{\principalSeries{\complexTuple{z}}}{c} = \Ind{\ParabolicForWss{k}{c}}{\GL_{kc}\left(\localField\right)}{\abs{{\det}_{\GL_c} }^{z_1} \boxtimes \dots \boxtimes \abs{{\det}_{\GL_c} }^{z_k}}.
\end{equation}
This coincides with the definition in  \cite{CaiFriedbergGourevitchKaplan2023} (see the paragraph just before Theorem 4 in \cite{CaiFriedbergGourevitchKaplan2023} and see the explanation below for independence of the order of $\complexTuple{z}$). It will follow from our explanation below that it also coincides with the definition in \cite{LapidMao2020}.

Zelevinsky's results \cite{Zelevinsky1980} imply that $\SpehRepresentation{\principalSeries{\complexTuple{z}}}{c}$ is irreducible. Indeed, in Zelevinsky's notations, $\SpehRepresentation{\principalSeries{\complexTuple{z}}}{c} = \left\langle \Delta_1 \right\rangle \times \dots \times \left\langle \Delta_k \right\rangle$ where (see \cite[Example 3.2]{Zelevinsky1980}) \begin{equation*}
	\Delta_i = \left\{\abs{\cdot}^{z_i - \frac{c-1}{2}},  \abs{\cdot}^{z_i - \frac{c-3}{2}}, \dots \abs{\cdot}^{z_i + \frac{c-1}{2}}\right\}.
\end{equation*}
Condition \eqref{eq:condition-for-speh-representation-to-be-irreducible} is equivalent (in Zelevinsky's terminology) to the segments $\left(\Delta_i\right)_{i=1}^k$ being mutually unlinked. Hence from \cite[Theorem 4.2]{Zelevinsky1980}, the Speh representation $\SpehRepresentation{\principalSeries{\complexTuple{z}}}{c}$ in \eqref{eq:speh-representation-as-parabolically-induced} is irreducible if and only if \eqref{eq:condition-for-speh-representation-to-be-irreducible} holds, and by \cite[Section 6.4]{Zelevinsky1980}, given \eqref{eq:condition-for-speh-representation-to-be-irreducible}, the equivalence class of \eqref{eq:speh-representation-as-parabolically-induced} does not depend on the order of the tuple $\complexTuple{z}$.
\begin{remark}
	One may drop the condition \eqref{eq:condition-for-speh-representation-to-be-irreducible} and define the notion of a Speh representation $\SpehRepresentation{\principalSeries{\complexTuple{z}}}{c}$ for more general irreducible generic unramified representations $\principalSeries{\complexTuple{z}}$ as in \cite{LapidMao2020}\footnote{This is slightly different from \cite{CaiFriedbergGourevitchKaplan2023}, as the Speh representation in \cite{CaiFriedbergGourevitchKaplan2023} need not be irreducible.}. By \cite[Remark 2.6]{LapidMao2020}, the representation $\SpehRepresentation{\principalSeries{\complexTuple{z}}}{c}$ is unramified exactly when the segments $\left(\Delta_i\right)_{i=1}^k$ are mutually unlinked, which is equivalent to condition \eqref{eq:condition-for-speh-representation-to-be-irreducible}.
\end{remark}

As in \Cref{sec:unramified-representations-and-spherical-vectors}, by the Iwasawa decomposition, the representation $\SpehRepresentation{\principalSeries{\complexTuple{z}}}{c}$ admits a unique spherical vector (up to scalar multiplication).

\subsubsection{Special values of the spherical $(k,c)$ function}\label{subsec:special-values-of-spherical-k-c-function}

Similarly to before, any spherical vector of $\Whittaker\left(\SpehRepresentation{\principalSeries{\complexTuple{z}}}{c}, \fieldCharacterkc{k}{c}\right)$ does not vanish at $\IdentityMatrix{kc}$ (this follows from the proof of \cite[Corollary 23]{CaiFriedbergGinzburgKaplan2019}, since we assume that \eqref{eq:condition-for-speh-representation-to-be-irreducible} holds). Let $\SpehSpherical{\complexTuple{z}}{c} \in \Whittaker\left(\SpehRepresentation{\principalSeries{\complexTuple{z}}}{\fieldCharacterkc{k}{c}}\right)$ be the unique spherical vector such that its value at $\IdentityMatrix{kc}$ is $1$.

The function $\SpehSpherical{\complexTuple{z}}{c}\left(h\right)$ is $\left(\UnipotentRadicalForWss{k}{c} \rtimes \diag^k\left(\GL_c\left(\localField\right)\right), \fieldCharacterkc{k}{c} \otimes \abs{\det_{\GL_c}}^{\sum_{i=1}^k z_i}\right)$-equivariant from the left and $\maximalCompact_{kc}$-invariant from the right (here $\abs{\det_{\GL_c}}$ is realized as a character of $\diag^k\left(\GL_c\left(\localField\right)\right)$ via the isomorphism $\GL_c\left(\localField\right) \ni g \mapsto \diag^k\left(g\right)$).

Unlike the previous cases where we were able to reduce the evaluation of special functions to their values on diagonal matrices, this case is more difficult. Thus, we will instead concentrate on matrices of a special form. Let $\rSpehSpherical{\complexTuple{z}}{c} \colon \GL_{c}\left(\localField\right) \to \cComplex$ be the function $$\rSpehSpherical{\complexTuple{z}}{c}\left(g\right) = \SpehSpherical{\complexTuple{z}}{c}\left(\diag\left(g, \IdentityMatrix{\left(k-1\right)c}\right)\right),$$
where $g \in \GL_{c}\left(\localField\right)$. This function is $\maximalCompact_c$-bi-invariant: it suffices to check the invariance from the left, and indeed for $k_1 \in \maximalCompact_c$ and $g \in \GL_c\left(\localField\right)$, $$\rSpehSpherical{\complexTuple{z}}{c}\left(k_1 g \right) = \abs{\det k_1}^{z_1 + \dots + z_k} \SpehSpherical{\complexTuple{z}}{c}\left(\diag\left(g, \IdentityMatrix{\left(k-1\right)c}\right) \diag\left(\IdentityMatrix{c}, \diag^{k-1}\left(k_1^{-1}\right)\right)\right) = \rSpehSpherical{\complexTuple{z}}{c}\left(g\right).$$
Hence, by the Cartan decomposition, it suffices to know the values of $\rSpehSpherical{\complexTuple{z}}{c}$ on matrices of the form $\uniformizer^{(i_1,\dots,i_c)}$, where $i_1 \ge \dots \ge i_c$. We claim that we may assume that $i_c \ge 0$.

\begin{proposition}\label{prop:vanishing-of-spherical-whittaker-speh-function}
	If $i_c < 0$, then $\rSpehSpherical{\complexTuple{z}}{c}\left( \uniformizer^{(i_1,\dots,i_c)}\right) = 0$.
\end{proposition}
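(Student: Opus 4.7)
The plan is a standard Whittaker-style vanishing argument. Let me set $h = \diag(\uniformizer^{(i_1,\dots,i_c)}, \IdentityMatrix{(k-1)c})$. My goal will be to exhibit an element $u \in \UnipotentRadicalForWss{k}{c}$ with $\fieldCharacterkc{k}{c}(u) \ne 1$ and $h^{-1} u h \in \maximalCompact_{kc}$. Once I have such a $u$, combining the left $(\UnipotentRadicalForWss{k}{c}, \fieldCharacterkc{k}{c})$-equivariance with the right $\maximalCompact_{kc}$-invariance of $\SpehSpherical{\complexTuple{z}}{c}$ yields
$$\fieldCharacterkc{k}{c}(u) \SpehSpherical{\complexTuple{z}}{c}(h) = \SpehSpherical{\complexTuple{z}}{c}(uh) = \SpehSpherical{\complexTuple{z}}{c}\bigl(h \cdot (h^{-1}uh)\bigr) = \SpehSpherical{\complexTuple{z}}{c}(h),$$
which forces $\rSpehSpherical{\complexTuple{z}}{c}(\uniformizer^{(i_1,\dots,i_c)}) = \SpehSpherical{\complexTuple{z}}{c}(h) = 0$.

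To produce such a $u$, I would use the simplest possible root contribution. Let $E_{c, 2c}$ denote the $kc \times kc$ matrix unit with a $1$ in position $(c, 2c)$ and zeros elsewhere, and consider $u_y = \IdentityMatrix{kc} + y E_{c, 2c}$ for a parameter $y \in \localField$. Since position $(c, 2c)$ is on the diagonal of the block $X_1$ in the block-matrix description of $\UnipotentRadicalForWss{k}{c}$, we have $u_y \in \UnipotentRadicalForWss{k}{c}$ and $\fieldCharacterkc{k}{c}(u_y) = \fieldCharacter(y)$. Since $h$ is diagonal with $h_{c,c} = \uniformizer^{i_c}$ and $h_{2c,2c} = 1$, a direct calculation gives
$$h^{-1} u_y h = \IdentityMatrix{kc} + y\, \uniformizer^{-i_c} E_{c, 2c},$$
which lies in $\maximalCompact_{kc}$ if and only if $y \in \uniformizer^{i_c} \ringOfIntegers$.

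To finish, I invoke the conductor hypothesis on $\fieldCharacter$. When $i_c < 0$, we have the inclusion $\uniformizer^{-1} \ringOfIntegers \subseteq \uniformizer^{i_c} \ringOfIntegers$. Since $\fieldCharacter$ has conductor $\ringOfIntegers$, it is non-trivial on $\uniformizer^{-1} \ringOfIntegers$, so I may pick $y \in \uniformizer^{-1} \ringOfIntegers \subseteq \uniformizer^{i_c} \ringOfIntegers$ with $\fieldCharacter(y) \ne 1$. This $y$ makes $u_y$ satisfy both required properties, and the conclusion follows. There is no real obstacle: the argument is essentially the classical proof that a Whittaker function on a $p$-adic group vanishes on diagonal torus elements that are ``too small'', adapted to the $(k,c)$-model setting. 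The one thing to notice is that the entire conjugation can be localized to a single off-diagonal entry on which the character of $\UnipotentRadicalForWss{k}{c}$ is non-trivial, which is exactly what the choice of the matrix unit $E_{c,2c}$ achieves.
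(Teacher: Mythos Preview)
Your argument is correct and complete; it is precisely the standard root-subgroup vanishing trick for Whittaker-type functionals, localized to a single entry on which the character $\fieldCharacterkc{k}{c}$ is non-trivial. The paper does not actually supply a proof of this proposition: it writes ``Omitted'' and points the reader to Section~6.2 of Jacquet--Rallis for the analogous computation, which is exactly the mechanism you have written out. (Note that your argument, like the proposition itself, implicitly requires $k \ge 2$ so that the block $X_1$ exists; this is the only regime in which the proposition is used in the paper.)
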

\begin{proof}
	Omitted. See for example \cite[Section 6.2]{JacquetRallis1996}.
\end{proof}

Although the restriction to matrices of the form $\diag\left(g, \IdentityMatrix{\left(k-1\right)c}\right)$ might seem limiting, the knowledge of these values is sufficient for many applications. See for example \cite[Proposition 2.5]{CaiFriedbergKaplan2022}, \cite[Proposition 3.17]{kaplan2018} and \cite{Luo2024}. As explained in \cite[Page 1100]{Sakellaridis2006}, when $k=2$, it suffices to restrict to matrices of these form. For general $k$, if one wishes to compute $\SpehSpherical{\complexTuple{z}}{c}$ for general matrices, it is not sufficient to restrict to diagonal matrices. One would have to consider matrices of the form $\diag\left(g_1,\dots,g_k\right)$, where $g_1,\dots,g_k \in B_c$ are upper triangular, where it suffices to assume that $g_1$ is a diagonal matrix and $g_k = \IdentityMatrix{c}$. Alternatively, one can consider matrices of the form $\diag\left(l_1 a_1, \dots, l_k a_k\right)$, where for every $i$, $a_i$ is a diagonal matrix and $l_i \in \maximalCompact_c$, such that $a_{i+1}^{-1} l_{i+1}^{-1} l_i a_{i} \in \squareMatrix_c\left(\ringOfIntegers\right)$ for every $1 \le i \le k-1$, and it suffices to assume that $l_1 = l_k = a_k = \IdentityMatrix{c}$. In both of these two cases, much complexity is added.

\subsection{Ginzburg--Kaplan integrals}

We now describe a recent integral construction of doubling type for the tensor product $L$-function. Is is due to Ginzburg \cite{ginzburg2019tensor} in the global case, and due to Kaplan \cite[Appendix A]{kaplan2018} in the local case (see also \cite{Kaplan2023}).

Assume from now on that $k > 1$. Let $\pi$ and $\tau$ be irreducible representations of $\GL_c\left(\localField\right)$ and $\GL_k\left(\localField\right)$, respectively. Assume that $\tau$ is generic.

For any $s \in \cComplex$, $v \in \pi$, $v^{\vee} \in \Contragradient{\pi}$ and $W \in \Whittaker\left(\SpehRepresentation{\tau}{c}, \fieldCharacterkc{k}{c}\right)$, define the zeta integral
$$\zetaIntegral\left(s, v, v^{\vee}, W ; \fieldCharacter\right) = \int_{\GL_c\left(\localField\right)} W\left(\diag\left(g, \IdentityMatrix{\left(k-1\right)c}\right)\right) \standardForm{\pi \left(g\right) v}{v^{\vee}} \abs{\det g}^{s - \frac{1 + \left(k-2\right)c}{2}} \mdifferential g.$$

This integral converges absolutely for $\RealPart s$ large (depending only on the representations $\pi$ and $\tau$). For any choice of $v,v^{\vee}$ and $W$, the integral $\zetaIntegral\left(s, v, v^{\vee}, W ; \fieldCharacter\right)$ converges in its convergence domain to a rational function in $q^{-s}$ (an element of $\cComplex(q^{-s})$), and therefore admits a meromorphic continuation to the entire plane. We continue denoting the meromorphic continuation by the same symbol.

As before, we focus on the case where $\pi$ and $\tau$ are unramified principal series representations. Let $\complexTuple{z}' =(z'_1, \dots, z'_c)$ and let $\complexTuple{z} =\left(z_1,\dots,z_k\right)$ be tuples of complex numbers, and let $\pi = \principalSeries{\complexTuple{z}'}$ and $\tau = \principalSeries{\complexTuple{z}}$. Assume that $z_i - z_j \ne \pm 1, \pm 2,\dots,\pm c \left(\bmod \frac{2 \pi i}{\log q} \zIntegers \right)$ and $z'_i \ne z'_j + 1 \left(\bmod \frac{2 \pi i}{\log q} \zIntegers \right)$ for any $i$ and $j$. In \cite[Section 3]{ginzburg2019tensor}, Ginzburg showed the following result.

\begin{theorem}\label{thm:ginzburg-unramified-computation}
	Normalize the Haar measure so that $\maximalCompact_c$ has volume $1$. Then $$\zetaIntegral\left(s, \SphericalSection{\complexTuple{z}'}, \SphericalSection{-\complexTuple{z}'}, \SpehSpherical{\complexTuple{z}}{c} ; \fieldCharacter \right) = \prod_{i = 1}^c \prod_{j = 1}^k \frac{1}{1 - q^{-s} \cdot q^{-z'_i} \cdot q^{-z_j}} = L\left(s, \principalSeries{\complexTuple{z'}} \times \principalSeries{\complexTuple{z}}\right).$$
\end{theorem}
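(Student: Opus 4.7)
The plan is to unfold the zeta integral via the Cartan decomposition of $\GL_c(\localField)$, apply Macdonald's formula (\Cref{thm:macdonald-formula}) to the spherical matrix coefficient, expand the target $L$-function via the Cauchy identity for Schur polynomials, and match the two resulting series using the symmetric-function duality between $\pHallLittlewood$ and $\hHallLittlewood$. The matrix coefficient $\standardForm{\principalSeries{\complexTuple{z}'}(g)\SphericalSection{\complexTuple{z}'}}{\SphericalSection{-\complexTuple{z}'}}$ equals $\SphericalFunction{\complexTuple{z}'}(g)$ and is bi-$\maximalCompact_c$-invariant; the function $\rSpehSpherical{\complexTuple{z}}{c}$ is bi-$\maximalCompact_c$-invariant by \Cref{subsec:special-values-of-spherical-k-c-function}; and $\abs{\det g}$ is $\maximalCompact_c$-invariant. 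Using the Cartan decomposition together with \Cref{prop:vanishing-of-spherical-whittaker-speh-function}, the integral collapses to
\begin{equation*}
\sum_{\ell(\complexTuple{\lambda}) \le c} \VolumeOf\bigl(\maximalCompact_c \uniformizer^{\complexTuple{\lambda}, c} \maximalCompact_c\bigr) \rSpehSpherical{\complexTuple{z}}{c}\bigl(\uniformizer^{\complexTuple{\lambda}, c}\bigr) \SphericalFunction{\complexTuple{z}'}\bigl(\uniformizer^{\complexTuple{\lambda}, c}\bigr) q^{-\left(s-\frac{1+(k-2)c}{2}\right)\sizeof{\complexTuple{\lambda}}}.
\end{equation*}

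Next, \Cref{thm:macdonald-formula} collapses $\VolumeOf \cdot \SphericalFunction{\complexTuple{z}'}$ evaluated at $\uniformizer^{\complexTuple{\lambda}, c}$ into $q^{\frac{(c-1)\sizeof{\complexTuple{\lambda}}}{2} - \nOfPartition(\complexTuple{\lambda})}\pHallLittlewood_{\complexTuple{\lambda}}(q^{-\complexTuple{z}'}; q^{-1})$, so the left-hand side carries an overall factor $q^{-s\sizeof{\complexTuple{\lambda}}} \cdot q^{\frac{(k-1)c\sizeof{\complexTuple{\lambda}}}{2} - \nOfPartition(\complexTuple{\lambda})}$ after combining with the $\abs{\det}^{s-(1+(k-2)c)/2}$ factor. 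On the right-hand side, the Cauchy identity expands the target $L$-function as $\sum_{\complexTuple{\lambda}} s_{\complexTuple{\lambda}}(q^{-\complexTuple{z}'}) s_{\complexTuple{\lambda}}(q^{-\complexTuple{z}}) q^{-s\sizeof{\complexTuple{\lambda}}}$. Matching coefficients of $q^{-s\sizeof{\complexTuple{\lambda}}}$ in each fixed degree and pairing both sides against $\hHallLittlewood_{\complexTuple{\mu}}(q^{-\complexTuple{z}'}; q^{-1})$ in the $\complexTuple{z}'$-variables --- using $\standardForm{\pHallLittlewood_{\complexTuple{\lambda}}}{\hHallLittlewood_{\complexTuple{\mu}}} = \delta_{\complexTuple{\lambda}, \complexTuple{\mu}}$ and the Schur expansion \eqref{eq:expansion-of-transformed-hall-littlewood-in-schur-basis} --- extracts a formula
\begin{equation*}
\rSpehSpherical{\complexTuple{z}}{c}\bigl(\uniformizer^{\complexTuple{\mu}, c}\bigr) = q^{-\frac{(k-1)c\sizeof{\complexTuple{\mu}}}{2}} \htHallLittlewood_{\complexTuple{\mu}}(q^{-\complexTuple{z}}; q)
\end{equation*}
equivalent to the theorem.

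The main obstacle is that this reasoning runs in the direction the paper uses to deduce \emph{its} main theorem from the present one, so taken literally it is circular. A genuinely independent proof must compute the values $\rSpehSpherical{\complexTuple{z}}{c}(\uniformizer^{\complexTuple{\mu}, c})$ without assuming the statement. The natural attempt is to write the spherical $(k,c)$-Whittaker function as a Jacquet-type integral of the standard spherical section of the parabolic induction \eqref{eq:speh-representation-as-parabolically-induced} against $\overline{\fieldCharacterkc{k}{c}}$, unfold via the Iwasawa decomposition, and isolate in each layer a $\GL_c \times \GL_1$ Rankin--Selberg-type factor $(1-q^{-s-z'_i-z_j})^{-1}$ whose product recovers the target $L$-function. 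Carrying out this layer-by-layer integration cleanly is the substantive part of Ginzburg's original argument, and is where the bulk of the work lies.
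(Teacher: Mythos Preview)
Your self-diagnosis is accurate: the argument you sketch is circular. What you have written is exactly the content of \Cref{rem:unramified-computation-of-ginzburg-kaplan-integral-using-modified-hall-littlewood}, which shows that \Cref{thm:main-result} implies \Cref{thm:ginzburg-unramified-computation}, together with the recognition that the proof of \Cref{thm:main-result} takes \Cref{thm:ginzburg-unramified-computation} as an input. The two statements are equivalent modulo Macdonald's formula, the Cauchy identity, and the $\pHallLittlewood$--$\hHallLittlewood$ duality; your manipulation establishes that equivalence cleanly but proves neither statement on its own. In particular, the step where you ``match coefficients'' with the Cauchy expansion of the $L$-function already assumes the identity you are trying to prove.

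Note that the paper does not supply its own proof of \Cref{thm:ginzburg-unramified-computation}: it is quoted from Ginzburg's paper, and the whole strategy here is to \emph{consume} Ginzburg's unramified computation in order to extract the values $\rSpehSpherical{\complexTuple{z}}{c}(\uniformizer^{\complexTuple{\mu},c})$. So there is nothing in the present paper to compare your proposal against beyond that citation. Your final paragraph correctly identifies where the actual work lies: one must either carry out Ginzburg's inductive Jacquet-integral unfolding, or produce an independent computation of the spherical $(k,c)$ values (e.g.\ by a direct recursion, as the introduction mentions as a desideratum). As it stands, the proposal is not a proof but a correct account of the logical dependencies and of why an independent argument is required.
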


\section{Main result}

We use a combinatorial argument to compute the values of $\rSpehSpherical{\complexTuple{z}}{c}$ on diagonal matrices.

\begin{theorem}\label{thm:main-result}
	Let $\complexTuple{\mu}$ be a partition with of length $\ell \le c$. Then $$\rSpehSpherical{\complexTuple{z}}{c}\left(\uniformizer^{\complexTuple{\mu}, c}\right) = \delta_{\ParabolicForWss{k}{c}}^{\frac{1}{2}}\left(\uniformizer^{\complexTuple{\mu}, kc}\right) \cdot \htHallLittlewood_{\complexTuple{\mu}}\left(q^{-z_1},\dots,q^{-z_k} ;q\right).$$
\end{theorem}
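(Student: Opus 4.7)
The plan is to derive the formula directly from Ginzburg's unramified computation (Theorem~\ref{thm:ginzburg-unramified-computation}) by expanding both sides of that identity in two different ways and extracting the desired special values through Hall--Littlewood duality. No new representation-theoretic input beyond what is already stated in the preliminaries is needed.

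First, I would specialize Theorem~\ref{thm:ginzburg-unramified-computation} to $\pi = \principalSeries{\complexTuple{z}'}$ with a free parameter $\complexTuple{z}' = (z'_1,\dots,z'_c)$. For spherical data the matrix coefficient collapses to $\SphericalFunction{\complexTuple{z}'}(g)$, so the zeta integral becomes a $\maximalCompact_c$-bi-invariant integral of $\rSpehSpherical{\complexTuple{z}}{c} \cdot \SphericalFunction{\complexTuple{z}'} \cdot \abs{\det}^{s - \alpha}$ with $\alpha = \tfrac{1+(k-2)c}{2}$. The Cartan decomposition together with Proposition~\ref{prop:vanishing-of-spherical-whittaker-speh-function} restricts the integral to a sum over partitions $\complexTuple{\mu}$ with $\lengthof(\complexTuple{\mu}) \le c$; substituting Macdonald's formula (Theorem~\ref{thm:macdonald-formula}) for the product $\VolumeOf(\maximalCompact_c \uniformizer^{\complexTuple{\mu},c} \maximalCompact_c) \cdot \SphericalFunction{\complexTuple{z}'}(\uniformizer^{\complexTuple{\mu},c})$ and collecting $q$-powers via $\alpha + \tfrac{c-1}{2} = \tfrac{(k-1)c}{2}$ collapses the integral into
\begin{equation*}
\sum_{\substack{\complexTuple{\mu} \\ \lengthof(\complexTuple{\mu}) \le c}} q^{-s\sizeof{\complexTuple{\mu}}} \cdot q^{\frac{(k-1)c \sizeof{\complexTuple{\mu}}}{2} - \nOfPartition(\complexTuple{\mu})} \cdot \pHallLittlewood_{\complexTuple{\mu}}(q^{-z'_1},\dots,q^{-z'_c}; q^{-1}) \cdot \rSpehSpherical{\complexTuple{z}}{c}(\uniformizer^{\complexTuple{\mu},c}).
\end{equation*}

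Next, I would expand the right-hand side of Theorem~\ref{thm:ginzburg-unramified-computation} using the Cauchy identity $\prod_{i,j}(1 - x_i y_j)^{-1} = \sum_{\complexTuple{\lambda}} s_{\complexTuple{\lambda}}(\complexTuple{x}) s_{\complexTuple{\lambda}}(\complexTuple{y})$ applied to $x_i = q^{-s-z'_i}$ and $y_j = q^{-z_j}$, together with the homogeneity of Schur polynomials, producing
\begin{equation*}
L(s, \pi \times \tau) = \sum_{\substack{\complexTuple{\lambda} \\ \lengthof(\complexTuple{\lambda}) \le c}} q^{-s \sizeof{\complexTuple{\lambda}}} \cdot s_{\complexTuple{\lambda}}(q^{-z'_1},\dots,q^{-z'_c}) \cdot s_{\complexTuple{\lambda}}(q^{-z_1},\dots,q^{-z_k}).
\end{equation*}
Both sides are expansions in $q^{-s}$ whose coefficients are symmetric polynomials in $(q^{-z'_1},\dots,q^{-z'_c})$; equating the coefficient of $q^{-sN}$ yields an identity in $\cComplex[\complexTuple{x}]^{\SymmetricGroup_c}$ for every $N$.

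Then I would isolate $\rSpehSpherical{\complexTuple{z}}{c}(\uniformizer^{\complexTuple{\nu},c})$ via the biorthogonality $\standardForm{\pHallLittlewood_{\complexTuple{\mu}}}{\hHallLittlewood_{\complexTuple{\nu}}} = \delta_{\complexTuple{\mu},\complexTuple{\nu}}$. Pairing each coefficient identity against $\hHallLittlewood_{\complexTuple{\nu}}(\,\cdot\,; q^{-1})$ in the $(q^{-z'_i})$-variables selects $\complexTuple{\mu} = \complexTuple{\nu}$ on the left, while on the right the expansion \eqref{eq:expansion-of-transformed-hall-littlewood-in-schur-basis} combined with orthonormality of Schur polynomials collapses the pairing to $\hHallLittlewood_{\complexTuple{\nu}}(q^{-z_1},\dots,q^{-z_k}; q^{-1})$. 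Applying \eqref{eq:modified-hall-littlewood-definition} rewrites this as $q^{-\nOfPartition(\complexTuple{\nu})} \htHallLittlewood_{\complexTuple{\nu}}(q^{-z_1},\dots,q^{-z_k}; q)$, and a short computation of the modular character of $\ParabolicForWss{k}{c}$ (its value on $\diag(A_1,\dots,A_k)$ is $\prod_i \abs{\det A_i}^{c(k-2i+1)}$) gives $\delta^{\frac{1}{2}}_{\ParabolicForWss{k}{c}}(\uniformizer^{\complexTuple{\nu}, kc}) = q^{-\frac{(k-1)c\sizeof{\complexTuple{\nu}}}{2}}$, matching the remaining $q$-power produced in the first step.

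The one technical point to verify is that the pairing step is genuinely well-defined: although $\hHallLittlewood_{\complexTuple{\nu}}$ is most naturally a symmetric function in infinitely many variables, the restriction $\lengthof(\complexTuple{\nu}) \le c$ forces every partition $\complexTuple{\lambda}$ contributing a nonzero Kostka--Foulkes coefficient $\kostkaFoulkes_{\complexTuple{\lambda},\complexTuple{\nu}}(q^{-1})$ to satisfy $\complexTuple{\lambda} \ge \complexTuple{\nu}$ in dominance order, and since $\sizeof{\complexTuple{\lambda}} = \sizeof{\complexTuple{\nu}}$ this forces $\lengthof(\complexTuple{\lambda}) \le \lengthof(\complexTuple{\nu}) \le c$. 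Hence the Schur expansion truncates cleanly to the $c$-variable symmetric polynomial ring where the pairing lives, and no contributions are lost when the second factor is evaluated in the $k$ variables $q^{-z_j}$. Once this verification is made, the theorem follows from routine bookkeeping of $q$-powers.
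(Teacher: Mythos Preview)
Your proposal is correct and follows essentially the same route as the paper: expand the unramified Ginzburg--Kaplan integral via Cartan decomposition and Macdonald's formula, match it against the Cauchy expansion of the $L$-function, and then extract $\rSpehSpherical{\complexTuple{z}}{c}(\uniformizer^{\complexTuple{\mu},c})$ by pairing with $\hHallLittlewood_{\complexTuple{\mu}}(\,\cdot\,;q^{-1})$ using the $\pHallLittlewood$--$\hHallLittlewood$ duality and \eqref{eq:expansion-of-transformed-hall-littlewood-in-schur-basis}. Your final paragraph on the dominance-order justification for truncating to $c$ variables is a welcome addition that the paper leaves implicit.
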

\begin{proof}
	Using the Cartan decomposition and \Cref{prop:vanishing-of-spherical-whittaker-speh-function}, we rewrite the Ginzburg--Kaplan zeta integral $\zetaIntegral\left(s, \SphericalSection{\complexTuple{z}'}, \SphericalSection{-\complexTuple{z}'}, \SpehSpherical{\complexTuple{z}}{c} ; \fieldCharacter \right)$ as the sum
	\begin{equation}\label{eq:ginzburg-kaplan-zeta-integral-with-double-cosets}
		\sum_{i_1 \ge i_2 \ge \dots \ge i_c \ge 0} \int_{\maximalCompact_c \uniformizer^{(i_1,\dots,i_c)} \maximalCompact_c} \SphericalFunction{\complexTuple{z}'}\left(g\right) \rSpehSpherical{\complexTuple{z}}{c}\left(g\right) q^{\left(-s + \frac{1+(k-2)c}{2}\right) \left(i_1 + \dots + i_c\right)} \mdifferential g.
	\end{equation}
	Since both $\SphericalFunction{\complexTuple{z}'}$ and $\rSpehSpherical{\complexTuple{z}}{c}$ are constant on the double coset $\maximalCompact_c \uniformizer^{(i_1,\dots,i_c)} \maximalCompact_c$, we may rewrite \eqref{eq:ginzburg-kaplan-zeta-integral-with-double-cosets} as
	\begin{equation}\label{eq:ginzburg-kaplan-zeta-integral-with-volume}
		\sum_{\complexTuple{\lambda}} \VolumeOf\left(\maximalCompact_c \uniformizer^{\complexTuple{\lambda}, c} \maximalCompact_c\right) \SphericalFunction{\complexTuple{z}'}\left(\uniformizer^{\complexTuple{\lambda}, c}\right) \rSpehSpherical{\complexTuple{z}}{c}\left(\uniformizer^{\complexTuple{\lambda}, c}\right) q^{(-s + \frac{1 + (k-2)c}{2}) \sizeof{\complexTuple{\lambda}}},
	\end{equation}
	where $\complexTuple{\lambda}$ runs over all the partitions of length $\le c$.
	Denote $\complexTuple{x} = \left(x_1,\dots,x_c\right) = \left(q^{-z'_1}, \dots, q^{-z'_c}\right)$, $\complexTuple{y} = \left(y_1,\dots,y_k\right) = \left(q^{-z_1}, \dots, q^{-z_k}\right)$ and $T = q^{-s}$. By Macdonald's formula (\Cref{thm:macdonald-formula}) and by \eqref{eq:ginzburg-kaplan-zeta-integral-with-volume}, we have that \begin{equation}\label{eq:evaluation-of-unramified-data}
		\zetaIntegral\left(s, \SphericalSection{\complexTuple{z}'}, \SphericalSection{-\complexTuple{z}'}, \SpehSpherical{\complexTuple{z}}{c} ; \fieldCharacter \right) = \sum_{\complexTuple{\lambda}} q^{\frac{c \left(k-1\right) \cdot \abs{\complexTuple{\lambda}}}{2} - \nOfPartition\left(\complexTuple{\lambda}\right)} \pHallLittlewood_{\complexTuple{\lambda}}\left(\complexTuple{x}; q^{-1}\right) \rSpehSpherical{\complexTuple{z}}{c}\left(\uniformizer^{\complexTuple{\lambda}, c}\right) T^{\sizeof{\complexTuple{\lambda}}}.
	\end{equation}
	On the other hand, by the unramified computation of Ginzburg (\Cref{thm:ginzburg-unramified-computation}), we have
	$$\zetaIntegral\left(s, \SphericalSection{\complexTuple{z}'}, \SphericalSection{-\complexTuple{z}'}, \SpehSpherical{\complexTuple{z}}{c} ; \fieldCharacter \right) = \prod_{i = 1}^c \prod_{j = 1}^k \frac{1}{1 - T x_i y_j}.$$
	By the Cauchy identity for Schur polynomials (see \cite[Page 63]{macdonald1998symmetric}), \begin{equation}\label{eq:cauchy-identity-for-schur-polynomials}
		\prod_{i = 1}^c \prod_{j = 1}^k \frac{1}{1 - T x_i y_j} = \sum_{\complexTuple{\lambda}} s_{\complexTuple{\lambda}}\left(\complexTuple{x}\right) s_{\complexTuple{\lambda}}\left(\complexTuple{y}\right) T^{\sizeof{\complexTuple{\lambda}}},
	\end{equation}
	where $\complexTuple{\lambda}$ goes over all partitions.
	
	Hence, for any non-negative integer $m$, we have the equality
	\begin{equation}\label{eq:schur-convolution-equals-hall-littlewood-convolution}
		\sum_{\complexTuple{\lambda} \vdash m} s_{\complexTuple{\lambda}}\left(\complexTuple{x}\right) s_{\complexTuple{\lambda}}\left(\complexTuple{y}\right) =  \sum_{\complexTuple{\lambda} \vdash m} q^{\frac{c \left(k-1\right) \cdot \abs{\complexTuple{\lambda}}}{2} - \nOfPartition\left(\complexTuple{\lambda}\right)} \cdot \pHallLittlewood_{\complexTuple{\lambda}}\left(\complexTuple{x}; q^{-1}\right) \rSpehSpherical{\complexTuple{z}}{c}\left(\uniformizer^{\complexTuple{\lambda}, c}\right),
	\end{equation}
	where on the right hand side we sum only over partitions of length $\le c$. Let $m = \sizeof{\complexTuple{\mu}}$. Since the basis of transformed Hall--Littlewood polynomials $\left(\hHallLittlewood_{\complexTuple{\mu}}\left(\complexTuple{x};q^{-1}\right)\right)_{\complexTuple{\mu}}$ of the space $\cComplex\left[\complexTuple{x}\right]^{\SymmetricGroup_c}$ is dual to the basis of Hall--Littlewood polynomials  $\left(\pHallLittlewood_{\complexTuple{\lambda}}\left(\complexTuple{x}; q^{-1}\right)\right)_{\complexTuple{\lambda}}$, we get by pairing \eqref{eq:schur-convolution-equals-hall-littlewood-convolution} with $q^{-\frac{m c \left(k-1\right)}{2} + \nOfPartition\left(\mu\right)} \hHallLittlewood_{\complexTuple{\mu}}\left(\complexTuple{x}; q^{-1}\right)$ that
	$$\rSpehSpherical{\complexTuple{z}}{c}\left(\uniformizer^{\complexTuple{\mu}, c}\right) = q^{-\frac{c\left(k-1\right) \cdot m}{2}} \cdot q^{\nOfPartition\left(\complexTuple{\mu}\right)} \sum_{\complexTuple{\lambda} \vdash m} \standardForm{ s_{\complexTuple{\lambda}}\left(\complexTuple{x}\right)}{\hHallLittlewood_{\complexTuple{\mu}}\left(\complexTuple{x}; q^{-1}\right)} \cdot s_{\complexTuple{\lambda}}\left(\complexTuple{y}\right),$$
	where we regard $y_1,\dots,y_k$ as constants.
	Notice that by \eqref{eq:expansion-of-transformed-hall-littlewood-in-schur-basis}, we have 
	$$\standardForm{ s_{\complexTuple{\lambda}}\left(\complexTuple{x}\right)}{\hHallLittlewood_{\complexTuple{\mu}}\left(\complexTuple{x}; q^{-1}\right)} = \kostkaFoulkes_{\complexTuple{\lambda},\complexTuple{\mu}}\left(q^{-1}\right).$$
	Thus, \begin{equation}\label{eq:special-value-with-kostka-foulkes-polynomials}\rSpehSpherical{\complexTuple{z}}{c}\left(\uniformizer^{\complexTuple{\mu}, c}\right) = q^{-\frac{c\left(k-1\right) \cdot m}{2}} \cdot q^{\nOfPartition\left(\complexTuple{\mu}\right)} \sum_{\complexTuple{\lambda} \vdash m} \kostkaFoulkes_{\complexTuple{\lambda}, \complexTuple{\mu}}\left(q^{-1}\right) s_{\complexTuple{\lambda}}\left(\complexTuple{y}\right).
	\end{equation}
	Using \eqref{eq:expansion-of-transformed-hall-littlewood-in-schur-basis} again, we have that the right-hand side of \eqref{eq:special-value-with-kostka-foulkes-polynomials} is $$q^{-\frac{c(k-1) \cdot m}{2}} \cdot q^{\nOfPartition\left(\complexTuple{\mu}\right)} \cdot \hHallLittlewood_{\complexTuple{\mu}}\left(\complexTuple{y}; q^{-1}\right) = q^{-\frac{c(k-1) \cdot \sizeof{\complexTuple{\mu}}}{2}} \cdot \htHallLittlewood_{\complexTuple{\mu}}\left(\complexTuple{y}; q\right),$$
	as required.
\end{proof}
\begin{remark}\label{rem:c-equals-1-shintani-formula-special-case}
	For $c = 1$, in which case we consider the normalized spherical element in $\Whittaker\left( \principalSeries{\complexTuple{z}}, \fieldCharacter \right)$, the formula reads 	$$\WhittakerSpherical{\complexTuple{z}}\begin{pmatrix}
		\uniformizer^{m}\\
		& \IdentityMatrix{k - 1}
	\end{pmatrix} = \delta^{\frac{1}{2}}_{B_k}\begin{pmatrix}
	\uniformizer^m\\
	& \IdentityMatrix{k-1}
	\end{pmatrix} \htHallLittlewood_{(m)}\left(q^{-z_1},\dots,q^{-z_k};q\right),$$
	where $m \ge 0$.
	By \Cref{rem:homogeneous-polynomial-case-hall-littlewood}, This coincides with Shintani's formula (\Cref{thm:shintani-formula}), since the Schur polynomial $s_{(m)}$ coincides with $h_m$, the complete homogeneous polynomial of degree $m$.
\end{remark}
\begin{remark}
	By \Cref{rem:k-equals-one-case-hall-littlewood}, the formula in \Cref{thm:main-result} is also true for $k=1$, since in this case $\complexTuple{z} = \left(z_1\right)$ and $$\SpehSpherical{\complexTuple{z}}{c}\left(\uniformizer^{\complexTuple{\mu}, c}\right)= \abs{\det \uniformizer^{\complexTuple{\mu}, c}}^{z_1} = q^{-\sizeof{\complexTuple{\mu}} z_1} = \htHallLittlewood_{\complexTuple{\mu}}\left(q^{-z_1}; q\right).$$
\end{remark}

The following theorem summarizes the results from \Cref{subsec:special-values-of-spherical-k-c-function} and \Cref{thm:main-result}.
\begin{theorem}\label{thm:special-values-theorem-summary}
	Let $\complexTuple{z} =\left(z_1,\dots,z_k\right)$ be a tuple of complex numbers with $z_i - z_j \ne \pm 1,\pm 2,\dots,\pm c  \left(\bmod \frac{2 \pi i}{\log q} \zIntegers \right)$ for every $i, j$. Let $\tau = \principalSeries{\complexTuple{z}}$ be the principal series representation $\Ind{B_k}{\GL_k\left(\localField\right)}{\abs{\cdot}^{z_1} \boxtimes \dots \boxtimes \abs{\cdot}^{z_k}}$ and let $\SpehSpherical{\complexTuple{z}}{c} \in \Whittaker\left( \SpehRepresentation{\tau}{c}, \fieldCharacterkc{k}{c}\right)$ be the unique spherical element satisfying $\SpehSpherical{\complexTuple{z}}{c}\left(\IdentityMatrix{kc}\right) = 1$. Then
	\begin{enumerate}
		\item The assignment $\GL_c\left(\localField\right) \to \cComplex$ given by $$g \mapsto \SpehSpherical{\complexTuple{z}}{c}\left(\diag\left(g, \IdentityMatrix{\left(k-1\right)c}\right)\right)$$ is $\GL_c\left(\ringOfIntegers\right)$-bi-invariant.
		\item For any tuple $\complexTuple{\mu} = \left(\mu_1,\dots,\mu_{\ell}\right)$ of length $\ell \le c$ of weakly decreasing integers, let $\uniformizer^{\complexTuple{\mu}, kc} \coloneqq \diag\left(\uniformizer^{\mu_1},\dots,\uniformizer^{\mu_\ell}, \IdentityMatrix{kc - \ell}\right)$. If $\mu_{\ell} < 0$, then $\SpehSpherical{\complexTuple{z}}{c}\left(\uniformizer^{\complexTuple{\mu}, kc}\right) = 0$.
		\item For any partition $\mu \vdash m$ of length $\ell \le c$, $$\SpehSpherical{\complexTuple{z}}{c}\left(\uniformizer^{\complexTuple{\mu}, kc}\right) = \delta_{\ParabolicForWss{k}{c}}^{\frac{1}{2}}\left(\uniformizer^{\complexTuple{\mu}, kc}\right) \cdot \htHallLittlewood_{\complexTuple{\mu}}\left(q^{-z_1},\dots,q^{-z_k};q\right),$$
		where $\htHallLittlewood_{\complexTuple{\mu}}\left(x_1,\dots,x_k; q\right)$ is the modified Hall--Littlewood polynomial (see \Cref{sec:hall-littlewood-polynomials}).
	\end{enumerate}
\end{theorem}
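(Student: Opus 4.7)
The plan is to observe that this theorem is purely a packaging statement: each of its three parts has been essentially established earlier, and the proof consists of assembling the pieces and making the transition from $\rSpehSpherical{\complexTuple{z}}{c}\left(g\right) = \SpehSpherical{\complexTuple{z}}{c}\left(\diag\left(g, \IdentityMatrix{(k-1)c}\right)\right)$ back to statements about $\SpehSpherical{\complexTuple{z}}{c}$ itself.

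For part (1), the right $\maximalCompact_c$-invariance is immediate: if $k_1 \in \maximalCompact_c$, then $\diag\left(k_1, \IdentityMatrix{(k-1)c}\right) \in \maximalCompact_{kc}$, and $\SpehSpherical{\complexTuple{z}}{c}$ is right $\maximalCompact_{kc}$-invariant. For the left invariance I would recall the computation already given in \Cref{subsec:special-values-of-spherical-k-c-function}: writing $\diag(k_1 g, \IdentityMatrix{(k-1)c}) = \diag^k(k_1) \cdot \diag(g, \IdentityMatrix{(k-1)c}) \cdot \diag(\IdentityMatrix{c}, \diag^{k-1}(k_1^{-1}))$ and using the $(k,c)$-equivariance of $\SpehSpherical{\complexTuple{z}}{c}$ under $\diag^k\left(\GL_c\left(\localField\right)\right)$ together with $\sizeof{\det k_1} = 1$.

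For part (2), apply \Cref{prop:vanishing-of-spherical-whittaker-speh-function} directly: if $\mu_\ell < 0$, then $\rSpehSpherical{\complexTuple{z}}{c}\left(\uniformizer^{\complexTuple{\mu}, c}\right) = 0$, which is exactly the assertion $\SpehSpherical{\complexTuple{z}}{c}\left(\uniformizer^{\complexTuple{\mu}, kc}\right) = 0$ under the identification $\uniformizer^{\complexTuple{\mu}, kc} = \diag\left(\uniformizer^{\complexTuple{\mu}, c}, \IdentityMatrix{(k-1)c}\right)$.

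For part (3), apply \Cref{thm:main-result} verbatim; the only thing to verify is that $\delta_{\ParabolicForWss{k}{c}}^{1/2}\left(\uniformizer^{\complexTuple{\mu}, kc}\right)$ on the right-hand side matches the modulus factor coming from \Cref{thm:main-result}. Since there is no real obstacle beyond bookkeeping, I would write the proof as three short paragraphs, one per item, each citing the corresponding earlier result. The only mild subtlety is to confirm the left $\maximalCompact_c$-invariance cleanly; I would present that as the single nontrivial check and treat the remaining two items as immediate corollaries of \Cref{prop:vanishing-of-spherical-whittaker-speh-function} and \Cref{thm:main-result}, respectively.
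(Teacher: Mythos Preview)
Your proposal is correct and matches the paper's own treatment: the paper states explicitly that this theorem ``summarizes the results from \Cref{subsec:special-values-of-spherical-k-c-function} and \Cref{thm:main-result}'' and gives no further proof. Your breakdown of parts (1)--(3) into the bi-invariance computation, \Cref{prop:vanishing-of-spherical-whittaker-speh-function}, and \Cref{thm:main-result} respectively is exactly the intended assembly.
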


As a corollary of \Cref{thm:main-result} and of the Shintani formula (\Cref{thm:shintani-formula}) combined with the formula $\delta_{B_k}^{\frac{1}{2}}\left(
			\uniformizer^{\complexTuple{\lambda}, k}\right) = q^{-\frac{\sizeof{\complexTuple{\lambda}}\left(k-1\right)}{2} + \nOfPartition\left(\lambda\right)}$, we get the following formula that expresses values of $\rSpehSpherical{\complexTuple{z}}{c}$ as linear combinations of values of $\WhittakerSpherical{\complexTuple{z}}$. It can be seen as a generalization of \cite[Theorem 5]{Sate2005}.
\begin{corollary}\label{cor:expansion-of-diag-in-terms-of-schur-functions}
	For any partition $\complexTuple{\mu} \vdash m$ of length $\ell \le c$, we have
	$$\rSpehSpherical{\complexTuple{z}}{c}\left(\uniformizer^{\complexTuple{\mu, c}}\right) = q^{-\frac{m\left(c-1\right)(k-1)}{2}} \sum_{\complexTuple{\lambda} \vdash m} q^{-\nOfPartition\left(\complexTuple{\lambda}\right)} \cdot \coKostkaFoulkes_{\complexTuple{\lambda}, \complexTuple{\mu}}\left(q\right) \cdot \WhittakerSpherical{\complexTuple{z}}\left(\uniformizer^{\complexTuple{\lambda}, k}\right),$$
	where the sum on the right hand side is over all partitions $\complexTuple{\lambda}$ of length $\ell\left(\complexTuple{\lambda}\right) \le k$ and $\coKostkaFoulkes_{\complexTuple{\lambda},\complexTuple{\mu}}\left(q\right) = q^{\nOfPartition\left(\complexTuple{\mu}\right)} \kostkaFoulkes_{\complexTuple{\lambda},\complexTuple{\mu}}\left(q^{-1}\right)$ is the cocharge Kostka--Foulkes polynomial (see \cite[Definition 3.4.13]{Haiman2003}).
\end{corollary}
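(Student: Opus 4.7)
The plan is to reduce the corollary to Theorem~\ref{thm:main-result} by unwinding the definition of the modified Hall--Littlewood polynomial $\htHallLittlewood_{\complexTuple{\mu}}$, expanding it in the Schur basis, and then invoking Shintani's formula (Theorem~\ref{thm:shintani-formula}) to convert Schur values into Whittaker values.

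First, I would compute the modulus character factor. For $\uniformizer^{\complexTuple{\mu}, kc} = \diag\left(\uniformizer^{\complexTuple{\mu}, c}, \IdentityMatrix{(k-1)c}\right)$, the block diagonal structure with Levi $\GL_c^k$ gives $\delta_{\ParabolicForWss{k}{c}}\left(\uniformizer^{\complexTuple{\mu}, kc}\right) = q^{-(k-1)c\sizeof{\complexTuple{\mu}}}$, so $\delta_{\ParabolicForWss{k}{c}}^{1/2}\left(\uniformizer^{\complexTuple{\mu}, kc}\right) = q^{-(k-1)cm/2}$ where $m = \sizeof{\complexTuple{\mu}}$. By Theorem~\ref{thm:main-result} with $\complexTuple{y} = (q^{-z_1},\dots,q^{-z_k})$,
$$\rSpehSpherical{\complexTuple{z}}{c}\left(\uniformizer^{\complexTuple{\mu}, c}\right) = q^{-\frac{(k-1)cm}{2}} \cdot \htHallLittlewood_{\complexTuple{\mu}}(\complexTuple{y}; q).$$

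Next, I would expand $\htHallLittlewood_{\complexTuple{\mu}}(\complexTuple{y}; q)$ in the Schur basis. By the definition \eqref{eq:modified-hall-littlewood-definition} and the expansion \eqref{eq:expansion-of-transformed-hall-littlewood-in-schur-basis},
$$\htHallLittlewood_{\complexTuple{\mu}}(\complexTuple{y}; q) = q^{\nOfPartition(\complexTuple{\mu})} \hHallLittlewood_{\complexTuple{\mu}}(\complexTuple{y}; q^{-1}) = q^{\nOfPartition(\complexTuple{\mu})} \sum_{\complexTuple{\lambda} \vdash m} \kostkaFoulkes_{\complexTuple{\lambda}, \complexTuple{\mu}}(q^{-1}) s_{\complexTuple{\lambda}}(\complexTuple{y}) = \sum_{\complexTuple{\lambda} \vdash m} \coKostkaFoulkes_{\complexTuple{\lambda}, \complexTuple{\mu}}(q) s_{\complexTuple{\lambda}}(\complexTuple{y}),$$
where the cocharge Kostka--Foulkes polynomial absorbs the factor $q^{\nOfPartition(\complexTuple{\mu})}$ by its definition. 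Terms with $\ell(\complexTuple{\lambda}) > k$ vanish automatically because $s_{\complexTuple{\lambda}}$ involves only $k$ variables, so the sum is effectively over partitions of length at most $k$.

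Finally, I would apply Shintani's formula \Cref{thm:shintani-formula} in reverse: for each $\complexTuple{\lambda} \vdash m$ with $\ell(\complexTuple{\lambda}) \le k$, the formula together with the stated identity $\delta_{B_k}^{1/2}\left(\uniformizer^{\complexTuple{\lambda}, k}\right) = q^{-\sizeof{\complexTuple{\lambda}}(k-1)/2 + \nOfPartition(\complexTuple{\lambda})}$ gives
$$s_{\complexTuple{\lambda}}(\complexTuple{y}) = q^{\frac{m(k-1)}{2} - \nOfPartition(\complexTuple{\lambda})} \cdot \WhittakerSpherical{\complexTuple{z}}\left(\uniformizer^{\complexTuple{\lambda}, k}\right).$$
Substituting this back and combining the two power-of-$q$ prefactors yields $q^{-(k-1)cm/2} \cdot q^{m(k-1)/2} = q^{-m(c-1)(k-1)/2}$, which is exactly the asserted prefactor.

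There is no real obstacle here; the corollary is essentially a bookkeeping exercise in reconciling the two bases (Schur and transformed Hall--Littlewood) via the Kostka--Foulkes matrix. The only point that requires a small verification is the computation of $\delta_{\ParabolicForWss{k}{c}}^{1/2}$ at $\uniformizer^{\complexTuple{\mu}, kc}$, which is straightforward from the block structure of the unipotent radical.
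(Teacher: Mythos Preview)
Your proposal is correct and follows precisely the route the paper indicates: it derives the corollary from \Cref{thm:main-result} by expanding $\htHallLittlewood_{\complexTuple{\mu}}$ in the Schur basis via \eqref{eq:modified-hall-littlewood-definition} and \eqref{eq:expansion-of-transformed-hall-littlewood-in-schur-basis}, and then converting each $s_{\complexTuple{\lambda}}(\complexTuple{y})$ into $\WhittakerSpherical{\complexTuple{z}}(\uniformizer^{\complexTuple{\lambda},k})$ using \Cref{thm:shintani-formula} together with $\delta_{B_k}^{1/2}(\uniformizer^{\complexTuple{\lambda},k}) = q^{-\sizeof{\complexTuple{\lambda}}(k-1)/2 + \nOfPartition(\complexTuple{\lambda})}$. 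The computation of $\delta_{\ParabolicForWss{k}{c}}^{1/2}(\uniformizer^{\complexTuple{\mu},kc}) = q^{-(k-1)cm/2}$ and the combination of prefactors are both accurate.
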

\begin{remark}\label{rem:unipotent-character-at-unipotent-conjugacy-class}
	By \cite[Proposition 3.4.14]{Haiman2003}, the value $\coKostkaFoulkes_{\complexTuple{\lambda},\complexTuple{\mu}}\left(q\right)$ is the value of the character of the unipotent representation of $\GL_m\left(\finiteField_q\right)$ corresponding to the partition $\lambda$, evaluated at the unipotent conjugacy class $J_{\complexTuple{\mu}}\left(1\right)$, under the convention that the partition $(m)$ corresponds to the trivial representation (as in \cite{Green55, macdonald1998symmetric}). In more details, an irreducible representation of $\GL_m\left(\finiteField_q\right)$ is called unipotent if it can be realized as a subrepresentation of the parabolically induced representation $\Ind{B_m\left(\finiteField_q\right)}{\GL_m\left(\finiteField_q\right)}{1}$. The equivalence classes of irreducible subrepresentations of the aforementioned parabolic induction are in bijection with irreducible representations of the symmetric group $\SymmetricGroup_m$, which in turn are in bijection with partitions of $m$.
\end{remark}
\begin{remark}
	When $c=1$, and thus $\complexTuple{\mu} = \left(m\right)$ for some $m \ge 0$, \Cref{cor:expansion-of-diag-in-terms-of-schur-functions} recovers the definition, namely $$\SpehSpherical{\complexTuple{z}}{c}\begin{pmatrix}
		\uniformizer^{m}\\
		& \IdentityMatrix{k - 1}
	\end{pmatrix} =  \WhittakerSpherical{\complexTuple{z}}\begin{pmatrix}
		\uniformizer^{m}\\
		& \IdentityMatrix{k - 1}
	\end{pmatrix}.$$ This is because $\coKostkaFoulkes_{\complexTuple{\lambda},\complexTuple{\mu}}\left(q\right)$ is zero unless $\complexTuple{\lambda}$ dominates $\complexTuple{\mu}$ in the dominance partial order. Hence, in the case we have in hand $\coKostkaFoulkes_{\complexTuple{\lambda},\complexTuple{\mu}}\left(q\right)$ is zero unless $\complexTuple{\lambda} = \complexTuple{\mu} = \left(m\right)$, in which case $\coKostkaFoulkes_{\complexTuple{\lambda},\complexTuple{\mu}}\left(q\right) = 1$ (as it is the value of the character of the trivial representation at $J_{(m)}\left(1\right)$).
\end{remark}
\begin{remark}\label{rem:unramified-computation-of-ginzburg-kaplan-integral-using-modified-hall-littlewood}
	One can use \Cref{thm:main-result} to prove \Cref{thm:ginzburg-unramified-computation} (hence, these theorems are equivalent). To do so, start with \eqref{eq:evaluation-of-unramified-data} and replace $\rSpehSpherical{\complexTuple{z}}{c}\left(\uniformizer^{\complexTuple{\lambda}, c}\right)$ with the expression from \Cref{thm:main-result} (with $\complexTuple{\lambda} = \complexTuple{\mu}$) and use \eqref{eq:modified-hall-littlewood-definition} and then \eqref{eq:expansion-of-transformed-hall-littlewood-in-schur-basis}. We arrive at the following expression for $\zetaIntegral\left(s, \SphericalSection{\complexTuple{z}'}, \SphericalSection{-\complexTuple{z}'}, \SpehSpherical{\complexTuple{z}}{c} ; \fieldCharacter \right)$:
	\begin{equation*}
	\sum_{m = 0}^{\infty} T^{m}  \sum_{\complexTuple{\lambda}, \complexTuple{\lambda}' \vdash m} \pHallLittlewood_{\complexTuple{\lambda}}\left(\complexTuple{x}; q^{-1}\right) \kostkaFoulkes_{\complexTuple{\lambda}', \complexTuple{\lambda}}\left(q^{-1}\right)  s_{\complexTuple{\lambda}'}\left(\complexTuple{y}\right).
\end{equation*}	
	By \cite[Definition 3.4.2]{Haiman2003}, for any $\lambda' \vdash m$, $$\sum_{\complexTuple{\lambda} \vdash m} \pHallLittlewood_{\complexTuple{\lambda}}\left(\complexTuple{x}; q^{-1}\right) \kostkaFoulkes_{\complexTuple{\lambda}', \complexTuple{\lambda}}\left(q^{-1}\right) = s_{\complexTuple{\lambda}'}\left(\complexTuple{x}\right).$$
	Thus $$\zetaIntegral\left(s, \SphericalSection{\complexTuple{z}'}, \SphericalSection{-\complexTuple{z}'}, \SpehSpherical{\complexTuple{z}}{c} ; \fieldCharacter \right) = \sum_{m = 0}^{\infty} \sum_{\complexTuple{\lambda}' \vdash m} s_{\complexTuple{\lambda}'}\left(\complexTuple{x}\right) s_{\complexTuple{\lambda}'}\left(\complexTuple{y}\right) T^m.$$
	\Cref{thm:ginzburg-unramified-computation} now follows from the Cauchy identity for Schur polynomials \eqref{eq:cauchy-identity-for-schur-polynomials}.
\end{remark}

\section*{Acknowledgments}
In June 2024, I attended the conference ``Solvable Lattice Models, Number Theory and Combinatorics'' at Trinity College, Dublin, supported by the Hamilton Mathematics Institute at the School of Maths, Trinity College Dublin, by Boston College and by the NSF (Grant 2401464). The conference inspired me to write this work. I was especially was inspired by the talk given by Valentin Buciumas. I would like to thank the organizers for a wonderful event, and would especially like to thank Solomon Friedberg for the invitation to participate and Daniel Bump, Fan Gao, Nadya Gurevich and Edmund Karasiewicz for engaging conversations.

I would like thank Eyal Kaplan for his encouragement and for updating me about the current status of this question. I would also like to thank Yiannis Sakellaridis, Erez Lapid and Zhengyu Mao for their feedback and encouragement.

Finally, I would like to thank the anonymous referees for their comments that improved this paper.

\bibliographystyle{abbrv}
\bibliography{references}
\end{document}